\newtheorem{theorem}{Theorem}
\theoremstyle{plain}
\newtheorem{corollary}{Corollary}
\newtheorem{definition}{Definition}
\newtheorem{lemma}{Lemma}
\newtheorem{proposition}{Proposition}
\numberwithin{equation}{section}
\begin{document}
\title[On $s$-Geometrically Convex Functions]{Some New Hermite-Hadamard type
inequalities for $s$-geometrically convex functions and their applications}
\author{\.{I}mdat \.{I}\c{s}can}
\address{Department of Mathematics, Faculty of Arts and Sciences,\\
Giresun University, 28100, Giresun, Turkey.}
\email{imdati@yahoo.com, imdat.iscan@giresun.edu.tr}
\subjclass[2000]{Primary 26D15; Secondary 26A51}
\keywords{$s$-Geometrically convex, Hermite-Hadamard type inequality}

\begin{abstract}
In this paper, some new integral inequalities of Hermite-Hadamard type
related to the $s$-geometrically convex functions are established and some
applications to special means of positive real numbers are also given.
\end{abstract}

\maketitle

\section{Introduction}

In this section, we firstly list several definitions and some known results.

\begin{definition}
Let $I$ be an interval in $%
\mathbb{R}
$. Then $f:I\rightarrow 
\mathbb{R}
$ is said to be convex if%
\begin{equation*}
f\left( tx+(1-t)y\right) \leq tf(x)+(1-t)f(y)
\end{equation*}%
for all $x,y\in I$ and $t\in \lbrack 0,1]$.
\end{definition}

Let $f:I\subseteq \mathbb{R\rightarrow R}$ be a convex function defined on
the interval $I$ of real numbers and $a,b\in I$ with $a<b$. The following
double inequality is well known in the literature as Hermite-Hadamard
integral inequality \cite{DP00} 
\begin{equation}
f\left( \frac{a+b}{2}\right) \leq \frac{1}{b-a}\dint\limits_{a}^{b}f(x)dx%
\leq \frac{f(a)+f(b)}{2}\text{.}  \label{1-1}
\end{equation}%
In \cite{HM94}, Hudzik and Maligranda considered the following class of
functions:

\begin{definition}
A function $f:I\subseteq 
\mathbb{R}
_{+}\rightarrow 
\mathbb{R}
$ where $%
\mathbb{R}
_{+}=\left[ 0,\infty \right) $, is said to be $s$-convex in the second sense
if%
\begin{equation*}
f\left( \alpha x+\beta y\right) \leq \alpha ^{s}f(x)+\beta ^{s}f(y)
\end{equation*}%
for all $x,y\in I$ and $\alpha ,\beta \geq 0$ with $\alpha +\beta =1$ and $s$
fixed in $\left( 0,1\right] $. They denoted this by $K_{s}^{2}.$
\end{definition}

It can be easily seen that for $s=1$, $s$-convexity reduces to ordinary
convexity of functions defined on $[0,\infty )$. For some recent results and
generalizations concerning $s$-convex functions see \cite%
{ADK11,AKO11,DF99,HBI09,I13b,KBO07}.

Recently, In \cite{ZJQ12}, the concept of geometrically and $s$%
-geometrically convex functions was introduced as follows:

\begin{definition}
A function $f:I\subset 
\mathbb{R}
_{+}=\left( 0,\infty \right) \rightarrow 
\mathbb{R}
_{+}$ is said to be a geometrically convex function if%
\begin{equation*}
f\left( x^{t}y^{1-t}\right) \leq f(x)^{t}f(y)^{1-t}
\end{equation*}%
for $x,y\in I$ and $t\in \lbrack 0,1]$.
\end{definition}

\begin{definition}
A function $f:I\subset 
\mathbb{R}
_{+}=\left( 0,\infty \right) \rightarrow 
\mathbb{R}
_{+}$ is said to be a $s$-geometrically convex function if%
\begin{equation*}
f\left( x^{t}y^{1-t}\right) \leq f(x)^{t^{s}}f(y)^{\left( 1-t\right) ^{s}}
\end{equation*}%
for some $s\in \left( 0,1\right] $, where $x,y\in I$ and $t\in \lbrack 0,1]$.
\end{definition}

In \cite{ZJQ12}, the authors has established some integral inequalities
connected with the inequalities (\ref{1-1}) for the $s$-geometrically convex
and monotonically decreasing functions. In \cite{XBQ12}, The authors have
introduced concepts of the $m$-and $(\alpha ,m)$-geometrically convex
functions and established some inequalities of Hermite--Hadamard type for
these classes of functions.

In \cite{I13}, the author proved the following results on the geometrically
convex functions.

\begin{theorem}
Suppose that $f:I\subseteq \mathbb{R}_{+}\mathbb{\rightarrow R}_{+}$ is
geometrically convex and $a,b\in I$ with $a<b$. If $f\in L\left[ a,b\right] $%
, then one has the inequalities:%
\begin{eqnarray*}
f\left( \sqrt{ab}\right) &\leq &\frac{1}{\ln b-\ln a}\dint\limits_{a}^{b}%
\frac{1}{x}\sqrt{f(x)f\left( \frac{ab}{x}\right) }dx\leq \frac{1}{\ln b-\ln a%
}\dint\limits_{a}^{b}\frac{f(x)}{x}dx \\
&\leq &\frac{f(b)-f(a)}{\ln f(b)-\ln f(a)}\leq \frac{f(a)+f(b)}{2}.
\end{eqnarray*}
\end{theorem}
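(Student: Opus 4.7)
The plan is to prove the four inequalities in order, using the substitution $x = a^{1-t}b^{t}$ (equivalently $\ln x = (1-t)\ln a + t\ln b$, with $dx/x = (\ln b - \ln a)\,dt$) to convert each log-weighted integral into a plain integral on $[0,1]$ where the definition of geometric convexity can be applied directly. Note that under this substitution $ab/x = a^{t}b^{1-t}$, so the map $x \mapsto ab/x$ corresponds to $t \mapsto 1-t$, which is what makes the first inequality symmetric.

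For the first inequality, I would apply geometric convexity with parameter $1/2$ to the pair $(x,ab/x) = (a^{1-t}b^{t}, a^{t}b^{1-t})$, whose geometric mean is $\sqrt{ab}$, getting $f(\sqrt{ab}) \leq \sqrt{f(a^{1-t}b^{t})f(a^{t}b^{1-t})}$ pointwise in $t$. Integrating over $t \in [0,1]$ and then changing variables back to $x$ yields the claimed bound. For the second inequality, I would apply the AM--GM inequality $\sqrt{f(x)f(ab/x)} \leq \tfrac{1}{2}(f(x) + f(ab/x))$, then split the resulting integral and use the substitution $x \mapsto ab/x$ on the second piece (which preserves the weight $dx/x$ and the interval $[a,b]$) to see that both pieces equal $\int_a^b f(x)/x\,dx$.

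For the third inequality, the substitution $x = a^{1-t}b^{t}$ converts $\frac{1}{\ln b - \ln a}\int_a^b \frac{f(x)}{x}dx$ into $\int_0^1 f(a^{1-t}b^{t})\,dt$, and geometric convexity bounds the integrand by $f(a)^{1-t}f(b)^{t}$. A direct evaluation gives
\begin{equation*}
\int_0^1 f(a)^{1-t}f(b)^{t}\,dt = \frac{f(b)-f(a)}{\ln f(b) - \ln f(a)},
\end{equation*}
which is the logarithmic mean $L(f(a),f(b))$. Finally, the fourth inequality is the classical bound $L(u,v) \leq \tfrac{u+v}{2}$ applied to $u = f(a), v = f(b)$.

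I do not expect any serious obstacle: all four steps are short, and the main care needed is in the first step, where one must verify that $(a^{1-t}b^{t}) \cdot (a^{t}b^{1-t}) = ab$ so that the $1/2$-power geometric-convexity estimate lands exactly on $f(\sqrt{ab})$, and in handling the degenerate case $f(a) = f(b)$ in the third inequality (where the logarithmic mean is interpreted as the common value, and the estimate $\int_0^1 f(a)^{1-t}f(b)^{t}\,dt$ reduces trivially). One should also note that geometric convexity together with positivity of $f$ ensures that $\sqrt{f(x)f(ab/x)}$ and $f(x)/x$ are measurable, so all integrals make sense given $f \in L[a,b]$.
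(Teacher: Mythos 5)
Your proof is correct: after the substitution $x=a^{1-t}b^{t}$ (so that $ab/x=a^{t}b^{1-t}$), the pointwise estimate $f(\sqrt{ab})\leq\sqrt{f(x)f(ab/x)}$ from geometric convexity at parameter $1/2$, the AM--GM step combined with the weight-preserving involution $x\mapsto ab/x$, the bound $\int_{0}^{1}f(a^{1-t}b^{t})\,dt\leq\int_{0}^{1}f(a)^{1-t}f(b)^{t}\,dt=\frac{f(b)-f(a)}{\ln f(b)-\ln f(a)}$ (with the degenerate case $f(a)=f(b)$ handled as you indicate), and the classical logarithmic--arithmetic mean inequality give exactly the four stated inequalities, with integrability of the middle terms guaranteed by $f\in L[a,b]$ as you note. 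The present paper states this theorem without proof, quoting it from \cite{I13}, and your argument is the standard one for this result, so there is no substantive divergence to report.
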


\begin{lemma}
\label{1.1} Let $f:I\subseteq \mathbb{R}_{+}\mathbb{\rightarrow R}$ be a
differentiable mapping on $I^{\circ }$, and $a,b\in I$,with $a<b$. If $%
f^{\prime }\in L\left[ a,b\right] $, then%
\begin{equation*}
f\left( \sqrt{ab}\right) -\frac{1}{\ln b-\ln a}\dint\limits_{a}^{b}\frac{f(x)%
}{x}dx
\end{equation*}%
\begin{equation*}
=\frac{\left( \ln b-\ln a\right) }{4}\left[ a\dint\limits_{0}^{1}t\left( 
\frac{b}{a}\right) ^{\frac{t}{2}}f^{\prime }\left( a^{1-t}\left( ab\right) ^{%
\frac{t}{2}}\right) dt-b\dint\limits_{0}^{1}t\left( \frac{a}{b}\right) ^{%
\frac{t}{2}}f^{\prime }\left( b^{1-t}\left( ab\right) ^{\frac{t}{2}}\right)
dt\right] ,
\end{equation*}%
\begin{equation*}
\frac{f(a)+f(b)}{2}-\frac{1}{\ln b-\ln a}\dint\limits_{a}^{b}\frac{f(x)}{x}dx
\end{equation*}%
\begin{equation*}
=\frac{\left( \ln b-\ln a\right) }{2}\left[ a\dint\limits_{0}^{1}t\left( 
\frac{b}{a}\right) ^{t}f^{\prime }\left( a^{1-t}b^{t}\right)
dt-b\dint\limits_{0}^{1}t\left( \frac{a}{b}\right) ^{t}f^{\prime }\left(
b^{1-t}a^{t}\right) dt\right]
\end{equation*}
\end{lemma}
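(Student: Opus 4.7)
The plan is to obtain each identity by applying integration by parts to the integrals on the right, after first recognizing that each factor of the form $c\,(\text{ratio})^{t}\,f'(\gamma(t))$ is, up to a constant, just $\frac{d}{dt}f(\gamma(t))$, where $\gamma$ is an exponential parametrization of a portion of $[a,b]$.

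For the first identity, I would begin by rewriting $a^{1-t}(ab)^{t/2}=a^{1-t/2}b^{t/2}$, so that
\[
\frac{d}{dt}\!\left(a^{1-t/2}b^{t/2}\right)=\frac{\ln b-\ln a}{2}\,a^{1-t/2}b^{t/2}=\frac{\ln b-\ln a}{2}\,a\!\left(\tfrac{b}{a}\right)^{t/2}.
\]
This identifies
\[
a\!\left(\tfrac{b}{a}\right)^{t/2}\!f'\!\left(a^{1-t/2}b^{t/2}\right)=\frac{2}{\ln b-\ln a}\,\frac{d}{dt}f\!\left(a^{1-t/2}b^{t/2}\right).
\]
Substituting into the first $t$-integral and applying $\int_0^1 t\,g'(t)\,dt=g(1)-\int_0^1 g(t)\,dt$ produces $f(\sqrt{ab})$ as the boundary contribution and leaves $\int_0^1 f(a^{1-t/2}b^{t/2})\,dt$. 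The change of variable $x=a^{1-t/2}b^{t/2}$, giving $dt=\frac{2}{\ln b-\ln a}\cdot\frac{dx}{x}$ with endpoints $a$ and $\sqrt{ab}$, reduces this to $\frac{2}{\ln b-\ln a}\int_a^{\sqrt{ab}}\frac{f(x)}{x}\,dx$.

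The second $t$-integral is treated symmetrically: from $b^{1-t}(ab)^{t/2}=b^{1-t/2}a^{t/2}$ the same computation now carries an opposite sign in the $t$-derivative, so after integration by parts and the substitution $x=b^{1-t/2}a^{t/2}$ one gets $-\frac{2}{\ln b-\ln a}\!\left[f(\sqrt{ab})-\frac{2}{\ln b-\ln a}\int_{\sqrt{ab}}^{b}\frac{f(x)}{x}\,dx\right]$. Subtracting this from the previous expression, the two $f(\sqrt{ab})$ contributions add, the two tail integrals fuse into $\int_a^b \frac{f(x)}{x}\,dx$, and multiplication by $\frac{\ln b-\ln a}{4}$ yields the first identity.

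The second identity is proved by replaying the same argument with the full-interval parametrizations $a^{1-t}b^{t}$ and $b^{1-t}a^{t}$ replacing the half-interval ones. Since now $\frac{d}{dt}[a^{1-t}b^{t}]=(\ln b-\ln a)\,a(b/a)^{t}$ with no factor of $\tfrac{1}{2}$, the normalizing constant coming out of each integration by parts is $\frac{1}{\ln b-\ln a}$, the substitution step sweeps the whole interval $[a,b]$ at once, and the combined identity is multiplied by $\frac{\ln b-\ln a}{2}$ rather than $\frac{\ln b-\ln a}{4}$. The main obstacle is purely bookkeeping: tracking which of the two pieces contributes $f(a)$ versus $f(b)$ at the $t=1$ boundary, and keeping the signs of the substitutions straight so that the two halves reinforce rather than cancel.
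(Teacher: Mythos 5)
Your proposal is correct: the identifications $a\left(\tfrac{b}{a}\right)^{t/2}f'\!\left(a^{1-t/2}b^{t/2}\right)=\tfrac{2}{\ln b-\ln a}\tfrac{d}{dt}f\!\left(a^{1-t/2}b^{t/2}\right)$ (and its three analogues), integration by parts against the factor $t$, and the substitution $x=\gamma(t)$ do reproduce both identities with the stated constants, the boundary terms supplying $f(\sqrt{ab})$ twice in the first case and $f(b)$, $f(a)$ respectively in the second. The paper itself states this lemma without proof, citing \cite{I13}, and your integration-by-parts argument is precisely the standard proof given there, so there is nothing to add beyond the sign bookkeeping you already flag.
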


\begin{theorem}
Let $f:I\subseteq \mathbb{R}_{+}\mathbb{\rightarrow R}_{+}$ be
differentiable on $I^{\circ }$, and $a,b\in I^{\circ }$ with $a<b$ and $%
f^{\prime }\in L\left[ a,b\right] .$ If $\left\vert f^{\prime }\right\vert
^{q}$ is geometrically convex on $\left[ a,b\right] $ for $q\geq 1$, then%
\begin{equation}
\left\vert f\left( \sqrt{ab}\right) -\frac{1}{\ln b-\ln a}%
\dint\limits_{a}^{b}\frac{f(x)}{x}dx\right\vert \leq \frac{\ln b-\ln a}{4}%
\left( \frac{1}{2}\right) ^{1-\frac{1}{q}}  \label{1-2}
\end{equation}%
\begin{equation*}
\times \left\{ a\left\vert f^{\prime }\left( a\right) \right\vert \left[
g_{1}\left( \alpha \left( \frac{q}{2}\right) \right) \right] ^{\frac{1}{q}%
}+b\left\vert f^{\prime }\left( b\right) \right\vert \left[ g_{1}\left(
\gamma \left( \frac{q}{2}\right) \right) \right] ^{\frac{1}{q}}\right\} ,
\end{equation*}%
\begin{equation}
\left\vert \frac{f(a)+f(b)}{2}-\frac{1}{\ln b-\ln a}\dint\limits_{a}^{b}%
\frac{f(x)}{x}dx\right\vert \leq \frac{\ln b-\ln a}{2}\left( \frac{1}{2}%
\right) ^{1-\frac{1}{q}}  \label{1-3}
\end{equation}%
\begin{equation*}
\times \left\{ a\left\vert f^{\prime }\left( a\right) \right\vert \left[
g_{1}\left( \alpha \left( q\right) \right) \right] ^{\frac{1}{q}%
}+b\left\vert f^{\prime }\left( b\right) \right\vert \left[ g_{1}\left(
\gamma \left( q\right) \right) \right] ^{\frac{1}{q}}\right\} ,
\end{equation*}%
where%
\begin{equation*}
\bigskip g_{1}(\alpha )=\left\{ 
\begin{array}{c}
\frac{1}{2},\ \ \ \ \ \ \ \ \ \ \ \alpha =1 \\ 
\frac{\alpha \ln \alpha -\alpha +1}{\left( \ln \alpha \right) ^{2}},\ \alpha
\neq 1%
\end{array}%
\right.
\end{equation*}%
and 
\begin{equation*}
\alpha \left( u\right) =\left( \frac{b\left\vert f^{\prime }(b)\right\vert }{%
a\left\vert f^{\prime }(a)\right\vert }\right) ^{u},\ \gamma \left( u\right)
=\left( \frac{a\left\vert f^{\prime }(a)\right\vert }{b\left\vert f^{\prime
}(b)\right\vert }\right) ^{u},\ u>0.
\end{equation*}
\end{theorem}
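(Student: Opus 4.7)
The plan is to start from the two identities provided by Lemma \ref{1.1} and then handle each of the two resulting integrals by the combination of the power-mean inequality (since $q\geq 1$) and the geometric convexity hypothesis on $|f'|^{q}$. The key computation to keep in mind throughout is the elementary fact
\begin{equation*}
\int_{0}^{1}t\,M^{t}\,dt=\frac{M\ln M-M+1}{(\ln M)^{2}}=g_{1}(M),\qquad M>0,
\end{equation*}
which is exactly the shape of the function $g_{1}$ appearing in the statement, and which explains why the constants $\alpha(u)$ and $\gamma(u)$ are raised to the powers $q/2$ (respectively $q$) in the two inequalities: these powers arise naturally from absorbing the exponential factors $(b/a)^{t/2}$ and $(b/a)^{t}$ inside the $q$-th power.

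For inequality \eqref{1-2}, I would first take absolute values in the first identity of Lemma \ref{1.1}, pull the modulus inside the integrals by the triangle inequality, and then apply the power-mean inequality to each of the two integrals with weight $g(t)=t$, in the form
\begin{equation*}
\int_{0}^{1}t\,h(t)\,dt\leq \Bigl(\int_{0}^{1}t\,dt\Bigr)^{1-1/q}\Bigl(\int_{0}^{1}t\,h(t)^{q}\,dt\Bigr)^{1/q}.
\end{equation*}
This produces the prefactor $(1/2)^{1-1/q}$ that appears in the statement. Next, writing $a^{1-t}(ab)^{t/2}=a^{1-t/2}b^{t/2}$ and applying geometric convexity of $|f'|^{q}$ gives $|f'(a^{1-t}(ab)^{t/2})|^{q}\leq |f'(a)|^{q(1-t/2)}|f'(b)|^{qt/2}$. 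Substituting this bound and collecting the exponentials yields an integrand of the form $t\,|f'(a)|^{q}\bigl(\tfrac{b|f'(b)|}{a|f'(a)|}\bigr)^{qt/2}=t\,|f'(a)|^{q}\,\alpha(q/2)^{t}$, whose integral is $|f'(a)|^{q}g_{1}(\alpha(q/2))$ by the elementary identity above. The same argument applied to the second integral (with the roles of $a$ and $b$ swapped) gives the $\gamma(q/2)$ term, and assembling the two halves reproduces \eqref{1-2}.

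The proof of \eqref{1-3} will be entirely parallel, starting instead from the second identity of Lemma \ref{1.1}. The only structural change is that the exponential factor is $(b/a)^{t}$ rather than $(b/a)^{t/2}$, and the argument of $f'$ is $a^{1-t}b^{t}$ rather than $a^{1-t}(ab)^{t/2}$. After the same power-mean step and the same use of geometric convexity, the analogous computation produces $\alpha(q)$ and $\gamma(q)$ instead of $\alpha(q/2)$ and $\gamma(q/2)$, which is precisely what is claimed.

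The only real obstacle is bookkeeping: one must carefully track how the factor $a$ (resp.\ $b$) outside, the $(b/a)^{tq/2}$ (resp.\ $(a/b)^{tq/2}$) inside, and the factors $|f'(a)|^{q(1-t/2)}|f'(b)|^{qt/2}$ coming from geometric convexity combine so that the quotient $b|f'(b)|/(a|f'(a)|)$ (resp.\ its reciprocal) emerges cleanly and can be identified with $\alpha(u)$ (resp.\ $\gamma(u)$) for the correct value of $u$. The case $q=1$ requires no separate treatment, since the power-mean step reduces to an identity and the result follows directly from the triangle inequality together with geometric convexity.
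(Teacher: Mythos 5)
Your proposal is correct and uses essentially the same machinery as the paper: the two identities of Lemma \ref{1.1}, the power-mean inequality with weight $t$ producing the factor $\left(\tfrac{1}{2}\right)^{1-\frac{1}{q}}$, geometric convexity of $\left\vert f^{\prime }\right\vert ^{q}$, and the evaluation $\int_{0}^{1}tM^{t}dt=g_{1}(M)$ which yields $\alpha(q/2),\gamma(q/2)$ and $\alpha(q),\gamma(q)$. The only cosmetic difference is that the paper recovers these inequalities as the $s=1$ case of Theorem \ref{2.1} (where the case analysis on $\left\vert f^{\prime }(a)\right\vert ,\left\vert f^{\prime }(b)\right\vert$ collapses), while you carry out the same computation directly.
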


\begin{theorem}
Let $f:I\subseteq \mathbb{R}_{+}\mathbb{\rightarrow R}_{+}$ be
differentiable on $I^{\circ }$, and $a,b\in I^{\circ }$ with $a<b$ and $%
f^{\prime }\in L\left[ a,b\right] .$ If $\left\vert f^{\prime }\right\vert
^{q}$ is geometrically convex on $\left[ a,b\right] $ for $q>1$, then%
\begin{eqnarray}
&&\left\vert f\left( \sqrt{ab}\right) -\frac{1}{\ln b-\ln a}%
\dint\limits_{a}^{b}\frac{f(x)}{x}dx\right\vert  \label{1-4} \\
&\leq &\frac{\ln b-\ln a}{4}\left( \frac{q-1}{2q-1}\right) ^{1-\frac{1}{q}} 
\notag \\
&&\times \left\{ a\left\vert f^{\prime }\left( a\right) \right\vert \left[
g_{2}\left( \alpha \left( \frac{q}{2}\right) \right) \right] ^{\frac{1}{q}%
}+b\left\vert f^{\prime }\left( b\right) \right\vert \left[ g_{2}\left(
\gamma \left( \frac{q}{2}\right) \right) \right] ^{\frac{1}{q}}\right\} 
\notag
\end{eqnarray}%
\begin{eqnarray}
&&\left\vert \frac{f(a)+f(b)}{2}-\frac{1}{\ln b-\ln a}\dint\limits_{a}^{b}%
\frac{f(x)}{x}dx\right\vert  \label{1-5} \\
&\leq &\frac{\ln b-\ln a}{2}\left( \frac{q-1}{2q-1}\right) ^{1-\frac{1}{q}} 
\notag \\
&&\times \left\{ a\left\vert f^{\prime }\left( a\right) \right\vert \left[
g_{2}\left( \alpha \left( q\right) \right) \right] ^{\frac{1}{q}%
}+b\left\vert f^{\prime }\left( b\right) \right\vert \left[ g_{2}\left(
\gamma \left( q\right) \right) \right] ^{\frac{1}{q}}\right\}  \notag
\end{eqnarray}%
where 
\begin{equation*}
g_{2}(\alpha )=\left\{ 
\begin{array}{c}
1,\ \ \ \ \alpha =1 \\ 
\frac{\alpha -1}{\ln \alpha },\ \alpha \neq 1%
\end{array}%
\right. ,
\end{equation*}%
and $\alpha \left( u\right) $,$\ \gamma \left( u\right) $ are the same as in
(\ref{2-6}).
\end{theorem}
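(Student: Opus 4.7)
The plan is to parallel the proof of the previous theorem (which used the power-mean inequality), but this time apply Hölder's inequality. The starting point is again the two integral identities supplied by Lemma \ref{1.1}; the only change in overall strategy is how one estimates the resulting weighted integrals of $|f'|$ once one has taken absolute values and split the right-hand side with the triangle inequality.

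First I would, for each of the two identities of Lemma \ref{1.1}, pass to absolute values and split the right-hand side into two nonnegative integrals of the form $\int_0^1 t\cdot r^t |f'(\cdots)|\,dt$. For the midpoint identity the inner arguments of $f'$ simplify to $a^{1-t/2}b^{t/2}$ and $a^{t/2}b^{1-t/2}$, with $r$ equal to $(b/a)^{1/2}$ or $(a/b)^{1/2}$; for the trapezoid identity they are $a^{1-t}b^{t}$ and $a^{t}b^{1-t}$, with $r$ equal to $b/a$ or $a/b$.

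Next I would apply Hölder's inequality with conjugate exponents $p=q/(q-1)$ and $q$, pulling the factor $t$ into the $p$-norm:
\begin{equation*}
\int_0^1 t\cdot r^t |f'(\cdots)|\,dt \leq \left(\int_0^1 t^{\frac{q}{q-1}}\,dt\right)^{1-\frac{1}{q}}\left(\int_0^1 r^{qt}|f'(\cdots)|^q\,dt\right)^{\frac{1}{q}}.
\end{equation*}
The first integral evaluates to $(q-1)/(2q-1)$, yielding the prefactor $\left(\frac{q-1}{2q-1}\right)^{1-1/q}$. I would then invoke geometric convexity of $|f'|^q$ in the form $|f'(a^{1-\tau}b^{\tau})|^q\leq |f'(a)|^{q(1-\tau)}|f'(b)|^{q\tau}$, taking $\tau=t/2$ for the midpoint case and $\tau=t$ for the trapezoid case. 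Substituting into the second Hölder factor, the $t$-dependence collapses into a single exponential $\alpha(q/2)^t$, $\gamma(q/2)^t$ (respectively $\alpha(q)^t$, $\gamma(q)^t$), whose integral over $[0,1]$ is exactly $g_2$ evaluated at that argument. Assembling the two pieces and reinstating the common factor $(\ln b-\ln a)/4$ or $(\ln b-\ln a)/2$ from Lemma \ref{1.1} produces (\ref{1-4}) and (\ref{1-5}) respectively.

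I do not expect a serious obstacle. The only delicate step is the algebraic bookkeeping which confirms that, after applying geometric convexity, the $t$-dependence in the second Hölder factor indeed reduces to a clean geometric factor $\alpha(\cdot)^t$ or $\gamma(\cdot)^t$; this is precisely what pins down the arguments $q/2$ versus $q$ in $\alpha,\gamma$. The degenerate case in which the base of the exponential equals $1$ is handled automatically by the piecewise definition of $g_2$.
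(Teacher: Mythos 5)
Your proposal is correct and mirrors the paper's own route: it uses the two identities of Lemma \ref{1.1}, applies H\"older's inequality with exponents $q/(q-1)$ and $q$ (keeping $t$ in the first factor so that $\int_0^1 t^{q/(q-1)}dt=\frac{q-1}{2q-1}$), and then geometric convexity of $\left\vert f^{\prime }\right\vert ^{q}$ collapses the remaining integrand to $\alpha(\cdot)^t$ or $\gamma(\cdot)^t$, whose integral is $g_2$ evaluated there—exactly the argument the paper gives for Theorem \ref{2.2} and then specializes at $s=1$ to obtain (\ref{1-4}) and (\ref{1-5}). No gaps.
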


In this paper, we will establish some new integral inequalities of
Hermite-Hadamard-like type related to the $s$-geometrically convex functions
and then apply these inequalities to special means.

\section{Main Results}

\begin{theorem}
\label{2.1}Let $f:I\subseteq \mathbb{R}_{+}\mathbb{\rightarrow R}_{+}$ be
differentiable on $I^{\circ }$, and $a,b\in I^{\circ }$ with $a<b$ and $%
f^{\prime }\in L\left[ a,b\right] .$ If $\left\vert f^{\prime }\right\vert
^{q}$ is $s$-geometrically convex on $\left[ a,b\right] $ for $q\geq 1$ and $%
s\in \left( 0,1\right] ,$ then%
\begin{equation}
\left\vert \frac{f(a)+f(b)}{2}-\frac{1}{\ln b-\ln a}\dint\limits_{a}^{b}%
\frac{f(x)}{x}dx\right\vert \leq \ln \left( \frac{b}{a}\right) \left( \frac{1%
}{2}\right) ^{2-\frac{1}{q}}H_{1}\left( s,q;g_{1}(\theta _{1}),g_{1}(\theta
_{2})\right) ,  \label{2-1}
\end{equation}%
\begin{equation}
\left\vert f\left( \sqrt{ab}\right) -\frac{1}{\ln b-\ln a}%
\dint\limits_{a}^{b}\frac{f(x)}{x}dx\right\vert \leq \ln \left( \frac{b}{a}%
\right) \left( \frac{1}{2}\right) ^{3-\frac{1}{q}}H_{1}\left(
s,q;g_{1}(\theta _{3}),g_{1}(\theta _{4})\right) ,  \label{2-2}
\end{equation}%
where 
\begin{equation*}
g_{1}(u)=\left\{ 
\begin{array}{c}
\frac{1}{2},\ \ \ \ \ \ \ \ \ \ \ u=1 \\ 
\frac{u\ln u-u+1}{\left( \ln u\right) ^{2}},\ u\neq 1%
\end{array}%
\right. ,
\end{equation*}%
\begin{eqnarray}
\theta _{1} &=&\left( \frac{b\left\vert f^{\prime }(b)\right\vert ^{s}}{%
a\left\vert f^{\prime }(a)\right\vert ^{s}}\right) ^{q},\ \theta _{2}=\left( 
\frac{a\left\vert f^{\prime }(a)\right\vert ^{s}}{b\left\vert f^{\prime
}(b)\right\vert ^{s}}\right) ^{q},  \label{2-3} \\
\theta _{3} &=&\sqrt{\theta _{1}},\ \theta _{4}=\sqrt{\theta _{2}},\   \notag
\end{eqnarray}%
\begin{eqnarray}
&&H_{1}\left( s,q;g_{1}(\theta _{i}),g_{1}(\theta _{j})\right)  \label{2-4}
\\
&=&\left\{ 
\begin{array}{c}
a\left\vert f^{\prime }(a)\right\vert ^{s}g_{1}^{1/q}\left( \theta
_{i}\right) +b\left\vert f^{\prime }(b)\right\vert ^{s}g_{1}^{1/q}\left(
\theta _{j}\right) , \\ 
\ \left\vert f^{\prime }(a)\right\vert ,\ \left\vert f^{\prime
}(b)\right\vert \leq 1, \\ 
a\left\vert f^{\prime }\left( a\right) \right\vert \left\vert f^{\prime
}\left( b\right) \right\vert ^{1-s}g_{1}^{1/q}\left( \theta _{i}\right)
+b\left\vert f^{\prime }\left( b\right) \right\vert \left\vert f^{\prime
}\left( a\right) \right\vert ^{1-s}g_{1}^{1/q}\left( \theta _{j}\right) , \\ 
\ \left\vert f^{\prime }(a)\right\vert ,\ \left\vert f^{\prime
}(b)\right\vert \geq 1, \\ 
a\left\vert f^{\prime }\left( a\right) \right\vert ^{s}\left\vert f^{\prime
}\left( b\right) \right\vert ^{1-s}g_{1}^{1/q}\left( \theta _{i}\right)
+b\left\vert f^{\prime }(b)\right\vert g_{1}^{1/q}\left( \theta _{j}\right) ,
\\ 
\ \left\vert f^{\prime }(a)\right\vert \leq 1\leq \ \left\vert f^{\prime
}(b)\right\vert , \\ 
a\left\vert f^{\prime }(a)\right\vert g_{1}^{1/q}\left( \theta _{i}\right)
+b\left\vert f^{\prime }\left( b\right) \right\vert ^{s}\left\vert f^{\prime
}\left( a\right) \right\vert ^{1-s}g_{1}^{1/q}\left( \theta _{j}\right) , \\ 
\ \left\vert f^{\prime }(b)\right\vert \leq 1\leq \ \left\vert f^{\prime
}(a)\right\vert .%
\end{array}%
\right. ,i=1,3,\ j=2,4  \notag
\end{eqnarray}
\end{theorem}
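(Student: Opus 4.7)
The plan is to combine the two integral identities of Lemma \ref{1.1} with H\"older's inequality and the hypothesis of $s$-geometric convexity. For (\ref{2-1}) I will start from the second identity of Lemma \ref{1.1}, and for (\ref{2-2}) from the first; passing to absolute values by the triangle inequality reduces the problem, with $\lambda=1$ in the first case and $\lambda=1/2$ in the second, to estimating the two integrals
\begin{equation*}
I_1=\int_0^1 t\,(b/a)^{\lambda t}\,|f'(a^{1-\lambda t}b^{\lambda t})|\,dt,\qquad I_2=\int_0^1 t\,(a/b)^{\lambda t}\,|f'(b^{1-\lambda t}a^{\lambda t})|\,dt.
\end{equation*}

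Next I apply H\"older's inequality in its power-mean form, valid for $q\geq 1$, to obtain
\begin{equation*}
I_1\leq\Bigl(\int_0^1 t\,dt\Bigr)^{1-1/q}\Bigl(\int_0^1 t\,(b/a)^{q\lambda t}\,|f'(a^{1-\lambda t}b^{\lambda t})|^q\,dt\Bigr)^{1/q}=\Bigl(\tfrac12\Bigr)^{1-1/q}J_1^{1/q}.
\end{equation*}
The factor $(1/2)^{1-1/q}$ combined with the $1/2$ (respectively $1/4$) in front of the identity of Lemma \ref{1.1} gives the overall $(1/2)^{2-1/q}$ in (\ref{2-1}) and the $(1/2)^{3-1/q}$ in (\ref{2-2}).

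The decisive step is to bound $J_1$ using $s$-geometric convexity of $|f'|^q$,
\begin{equation*}
|f'(a^{1-\lambda t}b^{\lambda t})|^q\leq |f'(a)|^{(1-\lambda t)^s q}\,|f'(b)|^{(\lambda t)^s q},
\end{equation*}
and to reshape the exponents so that the kernel of (\ref{2-3}) appears. Writing $(1-\lambda t)^s=s(1-\lambda t)+r_1(t)$ and $(\lambda t)^s=s\lambda t+r_2(t)$, the elementary estimates
\begin{equation*}
su\leq u^s\leq su+(1-s),\qquad u\in[0,1],\ s\in(0,1],
\end{equation*}
which follow from concavity of $u\mapsto u^s$ together with $u^s\geq u$, give $0\leq r_i(t)\leq 1-s$. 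According to the signs of $|f'(a)|-1$ and $|f'(b)|-1$, the residual $|f'(a)|^{qr_1(t)}|f'(b)|^{qr_2(t)}$ is bounded above by $1$, by $(|f'(a)||f'(b)|)^{(1-s)q}$, or by one of the two mixed forms $|f'(b)|^{(1-s)q}$ and $|f'(a)|^{(1-s)q}$; this is exactly the four-way case split recorded in (\ref{2-4}). Extracting the resulting constant reduces the integrand to $t\,\theta_i^{t}$ and integrates to $g_1(\theta_i)$ by the definition of $g_1$, while the symmetric argument applied to $I_2$ contributes $g_1(\theta_j)$.

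The hard part is the bookkeeping across the four sign regions: one must apply the upper estimate $u^s\leq su+(1-s)$ precisely to those factors whose base exceeds $1$ and the lower estimate $u^s\geq su$ to those whose base is below $1$, and the same dichotomy must then be repeated for $I_2$ with $a$ and $b$ interchanged. Once this matching is correctly arranged, the four expressions in (\ref{2-4}) emerge directly; inequality (\ref{2-2}) is structurally identical to (\ref{2-1}), the only change being that $\lambda=1/2$ turns the exponent $q$ in the definitions of $\theta_1,\theta_2$ into $q/2$, producing $\theta_3=\sqrt{\theta_1}$ and $\theta_4=\sqrt{\theta_2}$.
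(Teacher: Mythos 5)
Your proposal is correct and follows essentially the same route as the paper: the two identities of Lemma \ref{1.1}, the power-mean form of H\"older's inequality producing the factor $\left(\tfrac12\right)^{1-\frac1q}$, the $s$-geometric convexity bound on $\left\vert f^{\prime}\right\vert^{q}$, and then the elementary exponent estimates $su\leq u^{s}\leq su+(1-s)$ (the paper's inequality (\ref{2-a})) applied according to whether $\left\vert f^{\prime}(a)\right\vert$ and $\left\vert f^{\prime}(b)\right\vert$ lie below or above $1$, yielding the four cases of (\ref{2-4}) and the integrals $\int_{0}^{1}t\,\theta_{i}^{t}\,dt=g_{1}(\theta_{i})$ with $\theta_{3}=\sqrt{\theta_{1}}$, $\theta_{4}=\sqrt{\theta_{2}}$ when $\lambda=\tfrac12$. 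Your residual bookkeeping $r_{i}(t)\in[0,1-s]$ is just a repackaging of the paper's case-by-case substitution, and the constants check out.
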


\bigskip

\begin{proof}
(1) Since $\left\vert f^{\prime }\right\vert ^{q}$ is $s$-geometrically
convex on $\left[ a,b\right] $, from lemma \ref{1.1} and power mean
inequality, we have%
\begin{eqnarray*}
&&\left\vert \frac{f(a)+f(b)}{2}-\frac{1}{\ln b-\ln a}\dint\limits_{a}^{b}%
\frac{f(x)}{x}dx\right\vert \\
&\leq &\frac{\ln \left( \frac{b}{a}\right) }{2}\left[ a\dint\limits_{0}^{1}t%
\left( \frac{b}{a}\right) ^{t}\left\vert f^{\prime }\left(
a^{1-t}b^{t}\right) \right\vert dt+b\dint\limits_{0}^{1}t\left( \frac{a}{b}%
\right) ^{t}\left\vert f^{\prime }\left( b^{1-t}a^{t}\right) \right\vert dt%
\right]
\end{eqnarray*}%
\begin{eqnarray*}
&\leq &\frac{a\ln \left( \frac{b}{a}\right) }{2}\left(
\dint\limits_{0}^{1}tdt\right) ^{1-\frac{1}{q}}\left(
\dint\limits_{0}^{1}t\left( \frac{b}{a}\right) ^{qt}\left\vert f^{\prime
}\left( a^{1-t}b^{t}\right) \right\vert ^{q}dt\right) ^{\frac{1}{q}} \\
&&+\frac{b\ln \left( \frac{b}{a}\right) }{2}\left(
\dint\limits_{0}^{1}tdt\right) ^{1-\frac{1}{q}}\left(
\dint\limits_{0}^{1}t\left( \frac{a}{b}\right) ^{qt}\left\vert f^{\prime
}\left( b^{1-t}a^{t}\right) \right\vert ^{q}dt\right) ^{\frac{1}{q}}
\end{eqnarray*}%
\begin{eqnarray}
&\leq &\frac{a\ln \left( \frac{b}{a}\right) }{2}\left( \frac{1}{2}\right)
^{1-\frac{1}{q}}\left( \dint\limits_{0}^{1}t\left( \frac{b}{a}\right)
^{qt}\left\vert f^{\prime }\left( a\right) \right\vert ^{q\left( 1-t\right)
^{s}}\left\vert f^{\prime }\left( b\right) \right\vert ^{qt^{s}}dt\right) ^{%
\frac{1}{q}}  \notag \\
&&+\frac{b\ln \left( \frac{b}{a}\right) }{2}\left( \frac{1}{2}\right) ^{1-%
\frac{1}{q}}\left( \dint\limits_{0}^{1}t\left( \frac{a}{b}\right)
^{qt}\left\vert f^{\prime }\left( b\right) \right\vert ^{q\left( 1-t\right)
^{s}}\left\vert f^{\prime }\left( a\right) \right\vert ^{qt^{s}}dt\right) ^{%
\frac{1}{q}}.  \label{2-5}
\end{eqnarray}%
If $0<\mu \leq 1\leq \eta ,\ 0<\alpha ,s\leq 1,$ then%
\begin{equation}
\mu ^{\alpha ^{s}}\leq \mu ^{\alpha s},\ \ \eta ^{\alpha ^{s}}\leq \eta
^{\alpha s+1-s}.  \label{2-a}
\end{equation}

(i) \ If $1\geq \left\vert f^{\prime }(a)\right\vert ,\ \left\vert f^{\prime
}(b)\right\vert ,$ by (\ref{2-a}) we obtain that%
\begin{eqnarray*}
&&\dint\limits_{0}^{1}t\left( \frac{b}{a}\right) ^{qt}\left\vert f^{\prime
}\left( a\right) \right\vert ^{q\left( 1-t\right) ^{s}}\left\vert f^{\prime
}\left( b\right) \right\vert ^{qt^{s}}dt \\
&\leq &\dint\limits_{0}^{1}t\left( \frac{b}{a}\right) ^{qt}\left\vert
f^{\prime }\left( a\right) \right\vert ^{qs\left( 1-t\right) }\left\vert
f^{\prime }\left( b\right) \right\vert ^{qst}dt=\left\vert f^{\prime }\left(
a\right) \right\vert ^{qs}g_{1}\left( \theta _{1}\right) ,
\end{eqnarray*}%
\begin{eqnarray}
&&\dint\limits_{0}^{1}t\left( \frac{a}{b}\right) ^{qt}\left\vert f^{\prime
}\left( b\right) \right\vert ^{q\left( 1-t\right) ^{s}}\left\vert f^{\prime
}\left( a\right) \right\vert ^{qt^{s}}dt  \label{2-6} \\
&\leq &\dint\limits_{0}^{1}t\left( \frac{a}{b}\right) ^{qt}\left\vert
f^{\prime }\left( b\right) \right\vert ^{qs\left( 1-t\right) }\left\vert
f^{\prime }\left( a\right) \right\vert ^{qst}dt=\left\vert f^{\prime }\left(
b\right) \right\vert ^{qs}g_{1}\left( \theta _{2}\right) .  \notag
\end{eqnarray}

(ii) \ If $1\leq \left\vert f^{\prime }(a)\right\vert ,\ \left\vert
f^{\prime }(b)\right\vert ,$ by (\ref{2-a}) we obtain that%
\begin{eqnarray*}
&&\dint\limits_{0}^{1}t\left( \frac{b}{a}\right) ^{qt}\left\vert f^{\prime
}\left( a\right) \right\vert ^{q\left( 1-t\right) ^{s}}\left\vert f^{\prime
}\left( b\right) \right\vert ^{qt^{s}}dt \\
&\leq &\dint\limits_{0}^{1}t\left( \frac{b}{a}\right) ^{qt}\left\vert
f^{\prime }\left( a\right) \right\vert ^{q\left( 1-st\right) }\left\vert
f^{\prime }\left( b\right) \right\vert ^{q\left( st+1-s\right) }dt \\
&=&\left( \left\vert f^{\prime }\left( a\right) \right\vert \left\vert
f^{\prime }\left( b\right) \right\vert ^{1-s}\right) ^{q}g_{1}\left( \theta
_{1}\right) ,
\end{eqnarray*}%
\begin{eqnarray}
&&\dint\limits_{0}^{1}t\left( \frac{a}{b}\right) ^{qt}\left\vert f^{\prime
}\left( b\right) \right\vert ^{q\left( 1-t\right) ^{s}}\left\vert f^{\prime
}\left( a\right) \right\vert ^{qt^{s}}dt  \label{2-7} \\
&\leq &\dint\limits_{0}^{1}t\left( \frac{a}{b}\right) ^{qt}\left\vert
f^{\prime }\left( b\right) \right\vert ^{q\left( 1-st\right) }\left\vert
f^{\prime }\left( a\right) \right\vert ^{q\left( st+1-s\right) }dt  \notag \\
&=&\left( \left\vert f^{\prime }\left( b\right) \right\vert \left\vert
f^{\prime }\left( a\right) \right\vert ^{1-s}\right) ^{q}g_{1}\left( \theta
_{2}\right) .  \notag
\end{eqnarray}%
(iii) \ If $\left\vert f^{\prime }(a)\right\vert \leq 1\leq \ \left\vert
f^{\prime }(b)\right\vert ,$ by (\ref{2-a}) we obtain that%
\begin{eqnarray*}
&&\dint\limits_{0}^{1}t\left( \frac{b}{a}\right) ^{qt}\left\vert f^{\prime
}\left( a\right) \right\vert ^{q\left( 1-t\right) ^{s}}\left\vert f^{\prime
}\left( b\right) \right\vert ^{qt^{s}}dt \\
&\leq &\dint\limits_{0}^{1}t\left( \frac{b}{a}\right) ^{qt}\left\vert
f^{\prime }\left( a\right) \right\vert ^{qs\left( 1-t\right) }\left\vert
f^{\prime }\left( b\right) \right\vert ^{q(st+1-s)}dt \\
&=&\left( \left\vert f^{\prime }\left( a\right) \right\vert ^{s}\left\vert
f^{\prime }\left( b\right) \right\vert ^{1-s}\right) ^{q}g_{1}\left( \theta
_{1}\right) ,
\end{eqnarray*}%
\begin{eqnarray}
&&\dint\limits_{0}^{1}t\left( \frac{a}{b}\right) ^{qt}\left\vert f^{\prime
}\left( b\right) \right\vert ^{q\left( 1-t\right) ^{s}}\left\vert f^{\prime
}\left( a\right) \right\vert ^{qt^{s}}dt  \label{2-8} \\
&\leq &\dint\limits_{0}^{1}t\left( \frac{a}{b}\right) ^{qt}\left\vert
f^{\prime }\left( b\right) \right\vert ^{q\left( 1-st\right) }\left\vert
f^{\prime }\left( a\right) \right\vert ^{qst}dt=\left\vert f^{\prime }\left(
b\right) \right\vert ^{q}g_{1}\left( \theta _{2}\right) .  \notag
\end{eqnarray}%
(iv) \ If $\left\vert f^{\prime }(b)\right\vert \leq 1\leq \ \left\vert
f^{\prime }(a)\right\vert ,$ by (\ref{2-a}) we obtain that%
\begin{eqnarray*}
&&\dint\limits_{0}^{1}t\left( \frac{b}{a}\right) ^{qt}\left\vert f^{\prime
}\left( a\right) \right\vert ^{q\left( 1-t\right) ^{s}}\left\vert f^{\prime
}\left( b\right) \right\vert ^{qt^{s}}dt \\
&\leq &\dint\limits_{0}^{1}t\left( \frac{b}{a}\right) ^{qt}\left\vert
f^{\prime }\left( a\right) \right\vert ^{q\left( 1-st\right) }\left\vert
f^{\prime }\left( b\right) \right\vert ^{qst}dt=\left\vert f^{\prime }\left(
a\right) \right\vert ^{q}g_{1}\left( \theta _{1}\right) ,
\end{eqnarray*}%
\begin{eqnarray}
&&\dint\limits_{0}^{1}t\left( \frac{a}{b}\right) ^{qt}\left\vert f^{\prime
}\left( b\right) \right\vert ^{q\left( 1-t\right) ^{s}}\left\vert f^{\prime
}\left( a\right) \right\vert ^{qt^{s}}dt  \label{2-9} \\
&\leq &\dint\limits_{0}^{1}t\left( \frac{a}{b}\right) ^{qt}\left\vert
f^{\prime }\left( b\right) \right\vert ^{qs\left( 1-t\right) }\left\vert
f^{\prime }\left( a\right) \right\vert ^{q(st+1-s)}dt=\left( \left\vert
f^{\prime }\left( b\right) \right\vert ^{s}\left\vert f^{\prime }\left(
a\right) \right\vert ^{1-s}\right) ^{q}g_{1}\left( \theta _{2}\right) . 
\notag
\end{eqnarray}%
From (\ref{2-5}) to (\ref{2-9}), (\ref{2-1}) holds.

(2) Since $\left\vert f^{\prime }\right\vert ^{q}$ is $s$-geometrically
convex on $\left[ a,b\right] $, from lemma \ref{2.1} and power mean
inequality, we have%
\begin{eqnarray*}
&&\left\vert f\left( \sqrt{ab}\right) -\frac{1}{\ln b-\ln a}%
\dint\limits_{a}^{b}\frac{f(x)}{x}dx\right\vert \\
&\leq &\frac{\ln \frac{b}{a}}{4}\left[ a\dint\limits_{0}^{1}t\left( \frac{b}{%
a}\right) ^{\frac{t}{2}}\left\vert f^{\prime }\left( a^{1-t}\left( ab\right)
^{\frac{t}{2}}\right) \right\vert dt+b\dint\limits_{0}^{1}t\left( \frac{a}{b}%
\right) ^{\frac{t}{2}}\left\vert f^{\prime }\left( b^{1-t}\left( ab\right) ^{%
\frac{t}{2}}\right) \right\vert dt\right]
\end{eqnarray*}%
\begin{eqnarray*}
&\leq &\frac{a\ln \frac{b}{a}}{4}\left( \dint\limits_{0}^{1}tdt\right) ^{1-%
\frac{1}{q}}\left( \dint\limits_{0}^{1}t\left( \frac{b}{a}\right) ^{\frac{qt%
}{2}}\left\vert f^{\prime }\left( a^{1-t}\left( ab\right) ^{\frac{t}{2}%
}\right) \right\vert ^{q}dt\right) ^{\frac{1}{q}} \\
&&+\frac{b\ln \frac{b}{a}}{4}\left( \dint\limits_{0}^{1}tdt\right) ^{1-\frac{%
1}{q}}\left( \dint\limits_{0}^{1}t\left( \frac{a}{b}\right) ^{\frac{qt}{2}%
}\left\vert f^{\prime }\left( b^{1-t}\left( ab\right) ^{\frac{t}{2}}\right)
\right\vert ^{q}dt\right) ^{\frac{1}{q}}
\end{eqnarray*}%
\begin{eqnarray}
&\leq &\frac{a\ln \frac{b}{a}}{4}\left( \frac{1}{2}\right) ^{1-\frac{1}{q}%
}\left( \dint\limits_{0}^{1}t\left( \frac{b}{a}\right) ^{\frac{qt}{2}%
}\left\vert f^{\prime }\left( b\right) \right\vert ^{q\left( t/2\right)
^{s}}\left\vert f^{\prime }\left( a\right) \right\vert ^{q\left( \left(
2-t\right) /2\right) ^{s}}dt\right) ^{\frac{1}{q}}  \notag \\
&&+\frac{b\ln \frac{b}{a}}{4}\left( \frac{1}{2}\right) ^{1-\frac{1}{q}%
}\left( \dint\limits_{0}^{1}t\left( \frac{a}{b}\right) ^{\frac{qt}{2}%
}\left\vert f^{\prime }\left( a\right) \right\vert ^{q\left( t/2\right)
^{s}}\left\vert f^{\prime }\left( b\right) \right\vert ^{q\left( \left(
2-t\right) /2\right) ^{s}}dt\right) ^{\frac{1}{q}}.  \label{2-10}
\end{eqnarray}

(i) \ If $1\geq \left\vert f^{\prime }(a)\right\vert ,\ \left\vert f^{\prime
}(b)\right\vert ,$ by (\ref{2-a}) we obtain that%
\begin{equation*}
\dint\limits_{0}^{1}t\left( \frac{b}{a}\right) ^{\frac{qt}{2}}\left\vert
f^{\prime }\left( b\right) \right\vert ^{q\left( t/2\right) ^{s}}\left\vert
f^{\prime }\left( a\right) \right\vert ^{q\left( \left( 2-t\right) /2\right)
^{s}}dt\leq \left\vert f^{\prime }\left( a\right) \right\vert
^{qs}g_{1}\left( \theta _{3}\right) ,
\end{equation*}%
\begin{equation}
\dint\limits_{0}^{1}t\left( \frac{a}{b}\right) ^{\frac{qt}{2}}\left\vert
f^{\prime }\left( a\right) \right\vert ^{q\left( t/2\right) ^{s}}\left\vert
f^{\prime }\left( b\right) \right\vert ^{q\left( \left( 2-t\right) /2\right)
^{s}}dt\leq \left\vert f^{\prime }\left( b\right) \right\vert
^{qs}g_{1}\left( \theta _{4}\right) .  \label{2-11}
\end{equation}

(ii) \ If $1\leq \left\vert f^{\prime }(a)\right\vert ,\ \left\vert
f^{\prime }(b)\right\vert ,$ by (\ref{2-a}) we obtain that%
\begin{equation*}
\dint\limits_{0}^{1}t\left( \frac{b}{a}\right) ^{\frac{qt}{2}}\left\vert
f^{\prime }\left( b\right) \right\vert ^{q\left( t/2\right) ^{s}}\left\vert
f^{\prime }\left( a\right) \right\vert ^{q\left( \left( 2-t\right) /2\right)
^{s}}dt\leq \left( \left\vert f^{\prime }\left( a\right) \right\vert
\left\vert f^{\prime }\left( b\right) \right\vert ^{1-s}\right)
^{q}g_{1}\left( \theta _{3}\right) ,
\end{equation*}%
\begin{equation}
\dint\limits_{0}^{1}t\left( \frac{a}{b}\right) ^{\frac{qt}{2}}\left\vert
f^{\prime }\left( a\right) \right\vert ^{q\left( t/2\right) ^{s}}\left\vert
f^{\prime }\left( b\right) \right\vert ^{q\left( \left( 2-t\right) /2\right)
^{s}}dt\leq \left( \left\vert f^{\prime }\left( b\right) \right\vert
\left\vert f^{\prime }\left( a\right) \right\vert ^{1-s}\right)
^{q}g_{1}\left( \theta _{4}\right) .  \label{2-12}
\end{equation}%
(iii) \ If $\left\vert f^{\prime }(a)\right\vert \leq 1\leq \ \left\vert
f^{\prime }(b)\right\vert ,$ by (\ref{2-a}) we obtain that%
\begin{equation*}
\dint\limits_{0}^{1}t\left( \frac{b}{a}\right) ^{\frac{qt}{2}}\left\vert
f^{\prime }\left( b\right) \right\vert ^{q\left( t/2\right) ^{s}}\left\vert
f^{\prime }\left( a\right) \right\vert ^{q\left( \left( 2-t\right) /2\right)
^{s}}dt\leq \left( \left\vert f^{\prime }\left( a\right) \right\vert
\left\vert f^{\prime }\left( b\right) \right\vert ^{1-s}\right)
^{q}g_{1}\left( \theta _{3}\right) ,
\end{equation*}%
\begin{equation}
\dint\limits_{0}^{1}t\left( \frac{a}{b}\right) ^{\frac{qt}{2}}\left\vert
f^{\prime }\left( a\right) \right\vert ^{q\left( t/2\right) ^{s}}\left\vert
f^{\prime }\left( b\right) \right\vert ^{q\left( \left( 2-t\right) /2\right)
^{s}}dt\leq \left\vert f^{\prime }\left( b\right) \right\vert
^{q}g_{1}\left( \theta _{4}\right) .  \label{2-13}
\end{equation}%
(iv) \ If $\left\vert f^{\prime }(b)\right\vert \leq 1\leq \ \left\vert
f^{\prime }(a)\right\vert ,$ by (\ref{2-a}) we obtain that%
\begin{equation*}
\dint\limits_{0}^{1}t\left( \frac{b}{a}\right) ^{\frac{qt}{2}}\left\vert
f^{\prime }\left( b\right) \right\vert ^{q\left( t/2\right) ^{s}}\left\vert
f^{\prime }\left( a\right) \right\vert ^{q\left( \left( 2-t\right) /2\right)
^{s}}dt\leq \left\vert f^{\prime }\left( a\right) \right\vert
^{q}g_{1}\left( \theta _{3}\right) ,
\end{equation*}%
\begin{equation}
\dint\limits_{0}^{1}t\left( \frac{a}{b}\right) ^{\frac{qt}{2}}\left\vert
f^{\prime }\left( a\right) \right\vert ^{q\left( t/2\right) ^{s}}\left\vert
f^{\prime }\left( b\right) \right\vert ^{q\left( \left( 2-t\right) /2\right)
^{s}}dt\leq \left( \left\vert f^{\prime }\left( b\right) \right\vert
^{s}\left\vert f^{\prime }\left( a\right) \right\vert ^{1-s}\right)
^{q}g_{1}\left( \theta _{4}\right) .  \label{2-14}
\end{equation}%
From (\ref{2-10}) to (\ref{2-14}), (\ref{2-2}) holds. This completes the
required proof.
\end{proof}

If taking $s=1$ in Theorem \ref{2.1}, we can derive the following
inequalities which are the same of the inequalities (\ref{1-2}) and (\ref%
{1-3}).

\begin{corollary}
Let $f:I\subseteq \mathbb{R}_{+}\mathbb{\rightarrow R}_{+}$ be
differentiable on $I^{\circ }$, and $a,b\in I^{\circ }$ with $a<b$ and $%
f^{\prime }\in L\left[ a,b\right] .$ If $\left\vert f^{\prime }\right\vert
^{q}$ is geometrically convex on $\left[ a,b\right] $ for $q\geq 1$, then%
\begin{equation*}
\left\vert \frac{f(a)+f(b)}{2}-\frac{1}{\ln b-\ln a}\dint\limits_{a}^{b}%
\frac{f(x)}{x}dx\right\vert \leq \ln \left( \frac{b}{a}\right) \left( \frac{1%
}{2}\right) ^{2-\frac{1}{q}}H_{1}\left( 1,q;g_{1}(\theta _{1}),g_{1}(\theta
_{2})\right) ,
\end{equation*}%
\begin{equation*}
\left\vert f\left( \sqrt{ab}\right) -\frac{1}{\ln b-\ln a}%
\dint\limits_{a}^{b}\frac{f(x)}{x}dx\right\vert \leq \ln \left( \frac{b}{a}%
\right) \left( \frac{1}{2}\right) ^{3-\frac{1}{q}}H_{1}\left(
1,q;g_{1}(\theta _{3}),g_{1}(\theta _{4})\right) ,
\end{equation*}
\end{corollary}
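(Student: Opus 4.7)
The plan is to obtain this corollary as an immediate specialization of Theorem \ref{2.1} to the value $s=1$. Comparing the definitions, a function is geometrically convex precisely when it is $s$-geometrically convex with $s=1$, so the hypothesis of Theorem \ref{2.1} is automatically satisfied with $s=1$. Thus I can simply invoke (\ref{2-1}) and (\ref{2-2}) and then simplify $H_{1}(1,q;\,\cdot\,,\,\cdot\,)$.

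The first concrete step is to inspect the four branches of the piecewise definition (\ref{2-4}) under the substitution $s=1$. In every branch, the factors $\left\vert f^{\prime}(a)\right\vert ^{1-s}$ and $\left\vert f^{\prime}(b)\right\vert ^{1-s}$ become $1$, while $\left\vert f^{\prime}(a)\right\vert ^{s}$ and $\left\vert f^{\prime}(b)\right\vert ^{s}$ become $\left\vert f^{\prime}(a)\right\vert $ and $\left\vert f^{\prime}(b)\right\vert $ respectively. Therefore all four cases collapse to the single expression
\begin{equation*}
H_{1}\!\left(1,q;g_{1}(\theta _{i}),g_{1}(\theta _{j})\right) = a\left\vert f^{\prime }(a)\right\vert g_{1}^{1/q}(\theta _{i}) + b\left\vert f^{\prime }(b)\right\vert g_{1}^{1/q}(\theta _{j}),
\end{equation*}
and the distinction between the sign-regions of $\left\vert f^{\prime}(a)\right\vert -1$ and $\left\vert f^{\prime}(b)\right\vert -1$ becomes vacuous. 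Plugging this into (\ref{2-1}) and (\ref{2-2}) yields the two displayed inequalities verbatim.

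Since I do not need to redo any of the integral estimates, there is essentially no obstacle; the only thing worth spelling out for the reader is the consistency between the new bounds and the earlier (\ref{1-2})--(\ref{1-3}). Substituting $s=1$ in (\ref{2-3}) gives $\theta _{1}=\alpha (q)$, $\theta _{2}=\gamma (q)$, $\theta _{3}=\alpha (q/2)$, $\theta _{4}=\gamma (q/2)$, matching the parameters used in (\ref{1-2})--(\ref{1-3}); and the scalar prefactors agree via the identities $(1/2)^{2-1/q}=\tfrac{1}{2}(1/2)^{1-1/q}$ and $(1/2)^{3-1/q}=\tfrac{1}{4}(1/2)^{1-1/q}$, so the corollary is indeed identical to (\ref{1-2})--(\ref{1-3}), as the surrounding text asserts.
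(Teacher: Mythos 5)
Your proposal is correct and matches the paper's own derivation, which likewise obtains the corollary by setting $s=1$ in Theorem \ref{2.1} and noting the result coincides with (\ref{1-2})--(\ref{1-3}). Your explicit verification that all four branches of $H_{1}$ collapse and that $\theta_{1},\dots,\theta_{4}$ reduce to $\alpha(q),\gamma(q),\alpha(q/2),\gamma(q/2)$ is just a more detailed writing-out of the same one-line specialization.
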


If taking $q=1$ in Theorem \ref{2.1}, we can derive the following corollary.

\begin{corollary}
Let $f:I\subseteq \mathbb{R}_{+}\mathbb{\rightarrow R}_{+}$ be
differentiable on $I^{\circ }$, and $a,b\in I^{\circ }$ with $a<b$ and $%
f^{\prime }\in L\left[ a,b\right] .$ If $\left\vert f^{\prime }\right\vert $
is $s$-geometrically convex on $\left[ a,b\right] $ for $s\in \left( 0,1%
\right] ,$ then%
\begin{equation*}
\left\vert \frac{f(a)+f(b)}{2}-\frac{1}{\ln b-\ln a}\dint\limits_{a}^{b}%
\frac{f(x)}{x}dx\right\vert \leq \ln \left( \frac{b}{a}\right) \left( \frac{1%
}{2}\right) ^{2-\frac{1}{q}}H_{1}\left( s,1;g_{1}(\theta _{1}),g_{1}(\theta
_{2})\right) ,
\end{equation*}%
\begin{equation*}
\left\vert f\left( \sqrt{ab}\right) -\frac{1}{\ln b-\ln a}%
\dint\limits_{a}^{b}\frac{f(x)}{x}dx\right\vert \leq \ln \left( \frac{b}{a}%
\right) \left( \frac{1}{2}\right) ^{3-\frac{1}{q}}H_{1}\left(
s,1;g_{1}(\theta _{3}),g_{1}(\theta _{4})\right) ,
\end{equation*}
\end{corollary}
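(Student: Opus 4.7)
The plan is to deduce both inequalities from the two integral identities in Lemma~\ref{1.1}, following the broad template of the authors' earlier theorems but with a new four-case split dictated by the position of $|f'(a)|$ and $|f'(b)|$ relative to $1$.

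First, apply the triangle inequality to each identity in Lemma~\ref{1.1}, pulling the absolute value inside the integrals. Then invoke the power-mean inequality with exponent $q$ by writing $t=t^{1-1/q}\cdot t^{1/q}$, which factors out $\bigl(\int_0^1 t\,dt\bigr)^{1-1/q}=(1/2)^{1-1/q}$. This leaves, for (\ref{2-1}), integrals of the form $\int_0^1 t(b/a)^{qt}|f'(a^{1-t}b^t)|^q\,dt$, and for (\ref{2-2}), the analogous $\int_0^1 t(b/a)^{qt/2}|f'(a^{1-t}(ab)^{t/2})|^q\,dt$, together with their symmetric $a\leftrightarrow b$ counterparts.

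Next, apply the $s$-geometric convexity of $|f'|^q$ to each integrand, for example $|f'(a^{1-t}b^t)|^q\le|f'(a)|^{q(1-t)^s}|f'(b)|^{qt^s}$. The main obstacle is that the exponents $(1-t)^s$ and $t^s$ are nonlinear in $t$, so the resulting integrals are not elementary. This is precisely where the auxiliary inequality (\ref{2-a}) enters: for $\mu\in(0,1]$ one has $\mu^{\alpha^s}\le\mu^{\alpha s}$, while for $\eta\ge 1$ one has $\eta^{\alpha^s}\le\eta^{\alpha s+1-s}$; both follow from the elementary bracketing $\alpha s\le\alpha^s\le\alpha s+1-s$ valid for $\alpha,s\in(0,1]$, combined with the monotonicity of the exponential in its exponent. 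Splitting into the four cases according to whether each of $|f'(a)|,|f'(b)|$ lies below or above $1$, I would apply (\ref{2-a}) componentwise to replace every nonlinear exponent by one linear in $t$.

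After this reduction each integral takes the form $\int_0^1 t\,c^t\,dt$ for an explicit positive constant $c$, which works out to be exactly $\theta_1$ or $\theta_2$ from (\ref{2-3}) for identity (\ref{2-1}), and $\theta_3=\sqrt{\theta_1}$ or $\theta_4=\sqrt{\theta_2}$ for identity (\ref{2-2}) (the square roots arising from the $qt/2$ exponents in the second identity of Lemma~\ref{1.1}). A routine integration by parts yields $\int_0^1 t\,c^t\,dt=(c\ln c-c+1)/(\ln c)^2=g_1(c)$ for $c\ne 1$, with limiting value $1/2$ at $c=1$. Collecting the constants produced by each of the four cases and by each of the two symmetric terms reproduces precisely the compound function $H_1$ of (\ref{2-4}), yielding (\ref{2-1}) and (\ref{2-2}) as claimed.
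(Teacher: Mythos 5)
Your argument is correct in substance, but it takes a different route from the paper for this particular statement: in the paper the corollary is obtained in one line by setting $q=1$ in Theorem \ref{2.1}, whereas you re-derive the inequalities from scratch. What you write out --- the triangle inequality applied to the identities of Lemma \ref{1.1}, the power-mean factorization $t=t^{1-1/q}\,t^{1/q}$ producing the factor $(1/2)^{1-1/q}$, the $s$-geometric convexity bound on $\left\vert f^{\prime }(a^{1-t}b^{t})\right\vert ^{q}$, the linearization of the exponents via (\ref{2-a}) (your bracketing $\alpha s\leq \alpha ^{s}\leq \alpha s+1-s$ for $\alpha ,s\in (0,1]$ is exactly the right justification), the four-case split according to the positions of $\left\vert f^{\prime }(a)\right\vert ,\left\vert f^{\prime }(b)\right\vert $ relative to $1$, and the evaluation $\int_{0}^{1}tc^{t}dt=g_{1}(c)$ with $c=\theta _{1},\theta _{2},\theta _{3},\theta _{4}$ --- is precisely the paper's proof of Theorem \ref{2.1}, so in effect you have reproved the theorem rather than deduced the corollary. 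Two small points should be tidied up: first, the corollary's hypothesis is that $\left\vert f^{\prime }\right\vert $ (not $\left\vert f^{\prime }\right\vert ^{q}$) is $s$-geometrically convex, so you should either set $q=1$ throughout, in which case the power-mean step is vacuous and the outer constants become $1/2$ and $1/4$ (the $q$ surviving in the corollary's display is a leftover of the paper's notation), or remark that raising the defining inequality to the $q$-th power shows $\left\vert f^{\prime }\right\vert ^{q}$ inherits $s$-geometric convexity; second, you end by asserting (\ref{2-1}) and (\ref{2-2}), i.e.\ the theorem, and still need the one-line specialization to $q=1$ to arrive at the stated bounds with $H_{1}(s,1;\cdot ,\cdot )$. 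With those trivial adjustments your proof is complete; the paper's route is shorter because Theorem \ref{2.1} is already available, while yours has the merit of being self-contained.
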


\begin{theorem}
\label{2.2}Let $f:I\subseteq \mathbb{R}_{+}\mathbb{\rightarrow R}_{+}$ be
differentiable on $I^{\circ }$, and $a,b\in I^{\circ }$ with $a<b$ and $%
f^{\prime }\in L\left[ a,b\right] .$ If $\left\vert f^{\prime }\right\vert
^{q}$ is $s$-geometrically convex on $\left[ a,b\right] $ for $q>1$ and $%
s\in \left( 0,1\right] ,$ then%
\begin{equation}
\left\vert \frac{f(a)+f(b)}{2}-\frac{1}{\ln b-\ln a}\dint\limits_{a}^{b}%
\frac{f(x)}{x}dx\right\vert \leq \frac{1}{2}\ln \left( \frac{b}{a}\right)
\left( \frac{q-1}{2q-1}\right) ^{1-\frac{1}{q}}H_{2}\left( s,q;g_{2}(\theta
_{1}),g_{2}(\theta _{2})\right) ,  \label{2-15}
\end{equation}%
\begin{equation}
\left\vert f\left( \sqrt{ab}\right) -\frac{1}{\ln b-\ln a}%
\dint\limits_{a}^{b}\frac{f(x)}{x}dx\right\vert \leq \frac{1}{4}\ln \left( 
\frac{b}{a}\right) \left( \frac{q-1}{2q-1}\right) ^{1-\frac{1}{q}%
}H_{2}\left( s,q;g_{2}(\theta _{3}),g_{2}(\theta _{4})\right) ,  \label{2-16}
\end{equation}%
where 
\begin{equation*}
g_{2}(u)=\left\{ 
\begin{array}{c}
1,\ \ \ \ u=1 \\ 
\frac{u-1}{\ln u},\ u\neq 1%
\end{array}%
\right. ,u>0
\end{equation*}%
\begin{eqnarray*}
&&H_{2}\left( s,q;g_{2}(\theta _{i}),g_{2}(\theta _{j})\right) \\
&=&\left\{ 
\begin{array}{c}
a\left\vert f^{\prime }(a)\right\vert ^{s}g_{2}^{1/q}\left( \theta
_{i}\right) +b\left\vert f^{\prime }(b)\right\vert ^{s}g_{2}^{1/q}\left(
\theta _{j}\right) , \\ 
\ \left\vert f^{\prime }(a)\right\vert ,\ \left\vert f^{\prime
}(b)\right\vert \leq 1, \\ 
a\left\vert f^{\prime }\left( a\right) \right\vert \left\vert f^{\prime
}\left( b\right) \right\vert ^{1-s}g_{2}^{1/q}\left( \theta _{i}\right)
+b\left\vert f^{\prime }\left( b\right) \right\vert \left\vert f^{\prime
}\left( a\right) \right\vert ^{1-s}g_{2}^{1/q}\left( \theta _{j}\right) , \\ 
\ \left\vert f^{\prime }(a)\right\vert ,\ \left\vert f^{\prime
}(b)\right\vert \geq 1, \\ 
a\left\vert f^{\prime }\left( a\right) \right\vert ^{s}\left\vert f^{\prime
}\left( b\right) \right\vert ^{1-s}g_{2}^{1/q}\left( \theta _{i}\right)
+b\left\vert f^{\prime }(b)\right\vert g_{2}^{1/q}\left( \theta _{j}\right) ,
\\ 
\ \left\vert f^{\prime }(a)\right\vert \leq 1\leq \ \left\vert f^{\prime
}(b)\right\vert , \\ 
a\left\vert f^{\prime }(a)\right\vert g_{2}^{1/q}\left( \theta _{i}\right)
+b\left\vert f^{\prime }\left( b\right) \right\vert ^{s}\left\vert f^{\prime
}\left( a\right) \right\vert ^{1-s}g_{2}^{1/q}\left( \theta _{j}\right) , \\ 
\ \left\vert f^{\prime }(b)\right\vert \leq 1\leq \ \left\vert f^{\prime
}(a)\right\vert .%
\end{array}%
\right. , \\
i &=&1,3,\ j=2,4
\end{eqnarray*}%
and $\theta _{1}$,$\ \theta _{2}$, $\theta _{3}$, $\theta _{4}$ are the same
as in (\ref{2-3}).
\end{theorem}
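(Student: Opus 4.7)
The proof should parallel the proof of Theorem \ref{2.1} almost line-for-line, with the single substantive change being the use of H\"older's inequality in place of the power-mean inequality. Concretely, starting from the two identities in Lemma \ref{1.1}, I would take absolute values inside the integrals to obtain
\begin{equation*}
\left\vert \tfrac{f(a)+f(b)}{2}-\tfrac{1}{\ln b-\ln a}\dint\limits_{a}^{b}\tfrac{f(x)}{x}dx\right\vert \leq \tfrac{\ln(b/a)}{2}\left[ a\dint\limits_{0}^{1}t\left(\tfrac{b}{a}\right)^{t}\left\vert f'(a^{1-t}b^{t})\right\vert dt + b\dint\limits_{0}^{1}t\left(\tfrac{a}{b}\right)^{t}\left\vert f'(b^{1-t}a^{t})\right\vert dt\right],
\end{equation*}
and similarly for $f(\sqrt{ab})$. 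Instead of splitting $t$ as $t^{1-1/q}\cdot t^{1/q}$, I would split $t\cdot (\cdot)$ as $t \cdot \left[(b/a)^{qt/(q-1)}\right]^{(q-1)/q} \cdot \bigl[\cdots\bigr]^{1/q}$ — that is, I write $t \left(\tfrac{b}{a}\right)^{t}|f'|^{\cdot} = \left[t^{q/(q-1)}\right]^{(q-1)/q}\left[\left(\tfrac{b}{a}\right)^{qt}|f'|^{\cdot q}\right]^{1/q}$ and apply H\"older's inequality. This produces the factor $\left(\int_0^1 t^{q/(q-1)}\,dt\right)^{(q-1)/q} = \left(\tfrac{q-1}{2q-1}\right)^{1-1/q}$ in place of $(1/2)^{1-1/q}$, which explains the leading constant in \eqref{2-15} and \eqref{2-16}.

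Next, I would use the $s$-geometric convexity of $|f'|^q$ exactly as in Theorem \ref{2.1}, so that the inner $L^q$ integrals become
\begin{equation*}
\dint\limits_{0}^{1}\left(\tfrac{b}{a}\right)^{qt}\left\vert f'(a)\right\vert^{q(1-t)^{s}}\left\vert f'(b)\right\vert^{qt^{s}}dt\quad\text{and}\quad \dint\limits_{0}^{1}\left(\tfrac{a}{b}\right)^{qt}\left\vert f'(b)\right\vert^{q(1-t)^{s}}\left\vert f'(a)\right\vert^{qt^{s}}dt,
\end{equation*}
with the analogous pair (using $t/2$ and $(2-t)/2$ in the exponents) for the midpoint version. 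The crucial point is that now there is no weight $t$ left inside these integrals, so after the same four-way case split via \eqref{2-a} each integral reduces to one of the form $\int_0^1 u^{t}\,dt = \frac{u-1}{\ln u} = g_2(u)$ evaluated at $u = \theta_i$, rather than $\int_0^1 t\,u^{t}\,dt = g_1(u)$. This is precisely why $g_1$ is replaced by $g_2$ in the statement.

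The four cases governing the definition of $H_2$ are treated verbatim as in Theorem \ref{2.1}: in case (i) one uses $\mu^{\alpha^s} \leq \mu^{\alpha s}$ for $\mu \leq 1$; in case (ii) one uses $\eta^{\alpha^s}\leq \eta^{\alpha s + 1 - s}$ for $\eta \geq 1$; and in cases (iii)--(iv) one uses one inequality from \eqref{2-a} for $|f'(a)|$ and the other for $|f'(b)|$. In each case the free exponent that is not of the form $t$ (such as $1-st$ or $s(1-t)$) multiplies into the geometric factors $|f'(a)|^{\cdot}|f'(b)|^{\cdot}$ that are pulled outside the integral, producing exactly the four branches listed in the definition of $H_2$. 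For \eqref{2-16}, the $t/2$ in the exponent replaces $qt$ by $qt/2$ inside the integral, which is why $\theta_i$ becomes $\sqrt{\theta_i}$, i.e.\ $\theta_3,\theta_4$.

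The main obstacle is purely bookkeeping: keeping the four cases, the two identities (midpoint and trapezoid versions), and the $s$-versus-$1$ interplay all straight, and verifying that the powers of $|f'(a)|$ and $|f'(b)|$ that are pulled out after the case analysis match the four branches of $H_2$ term by term. There is no genuinely new analytic idea beyond swapping power-mean for H\"older; once the computation in Theorem \ref{2.1} is in hand, this theorem follows by repeating the argument with $\int_0^1 t^{q/(q-1)}\,dt$ replacing $\int_0^1 t\,dt$ outside the H\"older application, and $\int_0^1 u^{t}\,dt$ replacing $\int_0^1 t\,u^{t}\,dt$ inside.
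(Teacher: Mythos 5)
Your proposal is correct and follows essentially the same route as the paper: apply Lemma \ref{1.1}, use H\"older's inequality with the split $t\left(\frac{b}{a}\right)^{t}\left\vert f^{\prime }\right\vert =\left[ t^{q/(q-1)}\right] ^{(q-1)/q}\left[ \left( \frac{b}{a}\right) ^{qt}\left\vert f^{\prime }\right\vert ^{q}\right] ^{1/q}$ so that $\left( \int_{0}^{1}t^{q/(q-1)}dt\right) ^{1-1/q}=\left( \frac{q-1}{2q-1}\right) ^{1-1/q}$, then invoke $s$-geometric convexity and the four-case estimates from (\ref{2-a}), reducing the inner integrals to $\int_{0}^{1}u^{t}dt=g_{2}(u)$ at $u=\theta _{i}$ (with $\theta _{3},\theta _{4}$ arising from the halved exponents in the midpoint identity). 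This matches the paper's proof of Theorem \ref{2.2} in both structure and constants.
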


\begin{proof}
(1) Since $\left\vert f^{\prime }\right\vert ^{q}$ is $s$-geometrically
convex on $\left[ a,b\right] $, from lemma \ref{1.1} and H\"{o}lder
inequality, we have%
\begin{eqnarray*}
&&\left\vert \frac{f(a)+f(b)}{2}-\frac{1}{\ln b-\ln a}\dint\limits_{a}^{b}%
\frac{f(x)}{x}dx\right\vert \\
&\leq &\frac{\ln \left( \frac{b}{a}\right) }{2}\left[ a\dint\limits_{0}^{1}t%
\left( \frac{b}{a}\right) ^{t}\left\vert f^{\prime }\left(
a^{1-t}b^{t}\right) \right\vert dt+b\dint\limits_{0}^{1}t\left( \frac{a}{b}%
\right) ^{t}\left\vert f^{\prime }\left( b^{1-t}a^{t}\right) \right\vert dt%
\right]
\end{eqnarray*}%
\begin{eqnarray*}
&\leq &\frac{a\ln \left( \frac{b}{a}\right) }{2}\left(
\dint\limits_{0}^{1}t^{\frac{q}{q-1}}dt\right) ^{1-\frac{1}{q}}\left(
\dint\limits_{0}^{1}\left( \frac{b}{a}\right) ^{qt}\left\vert f^{\prime
}\left( a^{1-t}b^{t}\right) \right\vert ^{q}dt\right) ^{\frac{1}{q}} \\
&&+\frac{b}{2}\ln \left( \frac{b}{a}\right) \left( \dint\limits_{0}^{1}t^{%
\frac{q}{q-1}}dt\right) ^{1-\frac{1}{q}}\left( \dint\limits_{0}^{1}\left( 
\frac{a}{b}\right) ^{qt}\left\vert f^{\prime }\left( b^{1-t}a^{t}\right)
\right\vert ^{q}dt\right) ^{\frac{1}{q}}
\end{eqnarray*}%
\begin{eqnarray}
&\leq &\frac{a\ln \left( \frac{b}{a}\right) }{2}\left( \frac{q-1}{2q-1}%
\right) ^{1-\frac{1}{q}}\left( \dint\limits_{0}^{1}\left( \frac{b}{a}\right)
^{qt}\left\vert f^{\prime }\left( b\right) \right\vert ^{qt^{s}}\left\vert
f^{\prime }\left( a\right) \right\vert ^{q\left( 1-t\right) ^{s}}dt\right) ^{%
\frac{1}{q}}  \notag \\
&&+\frac{b}{2}\ln \left( \frac{b}{a}\right) \left( \frac{q-1}{2q-1}\right)
^{1-\frac{1}{q}}\left( \dint\limits_{0}^{1}\left( \frac{a}{b}\right)
^{qt}\left\vert f^{\prime }\left( a\right) \right\vert ^{qt^{s}}\left\vert
f^{\prime }\left( b\right) \right\vert ^{q\left( 1-t\right) ^{s}}dt\right) ^{%
\frac{1}{q}}.  \label{2-17}
\end{eqnarray}

(i) \ If $1\geq \left\vert f^{\prime }(a)\right\vert ,\ \left\vert f^{\prime
}(b)\right\vert ,$ by (\ref{2-a}) we have%
\begin{equation*}
\dint\limits_{0}^{1}\left( \frac{b}{a}\right) ^{qt}\left\vert f^{\prime
}\left( b\right) \right\vert ^{qt^{s}}\left\vert f^{\prime }\left( a\right)
\right\vert ^{q\left( 1-t\right) ^{s}}dt=\left\vert f^{\prime }\left(
a\right) \right\vert ^{qs}g_{2}\left( \theta _{1}\right) ,
\end{equation*}%
\begin{equation}
\dint\limits_{0}^{1}\left( \frac{a}{b}\right) ^{qt}\left\vert f^{\prime
}\left( a\right) \right\vert ^{qt^{s}}\left\vert f^{\prime }\left( b\right)
\right\vert ^{q\left( 1-t\right) ^{s}}dt=\left\vert f^{\prime }\left(
b\right) \right\vert ^{qs}g_{2}\left( \theta _{2}\right) .  \label{2-18}
\end{equation}

(ii) \ If $1\leq \left\vert f^{\prime }(a)\right\vert ,\ \left\vert
f^{\prime }(b)\right\vert ,$ by (\ref{2-a}) we have%
\begin{equation*}
\dint\limits_{0}^{1}\left( \frac{b}{a}\right) ^{qt}\left\vert f^{\prime
}\left( b\right) \right\vert ^{qt^{s}}\left\vert f^{\prime }\left( a\right)
\right\vert ^{q\left( 1-t\right) ^{s}}dt\leq \left( \left\vert f^{\prime
}\left( a\right) \right\vert \left\vert f^{\prime }\left( b\right)
\right\vert ^{1-s}\right) ^{q}g_{2}\left( \theta _{1}\right) ,
\end{equation*}%
\begin{equation}
\dint\limits_{0}^{1}\left( \frac{a}{b}\right) ^{qt}\left\vert f^{\prime
}\left( a\right) \right\vert ^{qt^{s}}\left\vert f^{\prime }\left( b\right)
\right\vert ^{q\left( 1-t\right) ^{s}}dt\leq \left( \left\vert f^{\prime
}\left( b\right) \right\vert \left\vert f^{\prime }\left( a\right)
\right\vert ^{1-s}\right) ^{q}g_{2}\left( \theta _{2}\right) .  \label{2-19}
\end{equation}%
(iii) \ If $\left\vert f^{\prime }(a)\right\vert \leq 1\leq \ \left\vert
f^{\prime }(b)\right\vert ,$ by (\ref{2-a}) we obtain that%
\begin{equation*}
\dint\limits_{0}^{1}\left( \frac{b}{a}\right) ^{qt}\left\vert f^{\prime
}\left( b\right) \right\vert ^{qt^{s}}\left\vert f^{\prime }\left( a\right)
\right\vert ^{q\left( 1-t\right) ^{s}}dt\leq \left( \left\vert f^{\prime
}\left( a\right) \right\vert ^{s}\left\vert f^{\prime }\left( b\right)
\right\vert ^{1-s}\right) ^{q}g_{2}\left( \theta _{1}\right) ,
\end{equation*}%
\begin{equation}
\dint\limits_{0}^{1}\left( \frac{a}{b}\right) ^{qt}\left\vert f^{\prime
}\left( a\right) \right\vert ^{qt^{s}}\left\vert f^{\prime }\left( b\right)
\right\vert ^{q\left( 1-t\right) ^{s}}dt\leq \left\vert f^{\prime }\left(
b\right) \right\vert ^{q}g_{2}\left( \theta _{2}\right) .  \label{2-20}
\end{equation}%
(iv) \ If $\left\vert f^{\prime }(b)\right\vert \leq 1\leq \ \left\vert
f^{\prime }(a)\right\vert ,$ by (\ref{2-a}) we obtain that%
\begin{equation*}
\dint\limits_{0}^{1}\left( \frac{b}{a}\right) ^{qt}\left\vert f^{\prime
}\left( b\right) \right\vert ^{qt^{s}}\left\vert f^{\prime }\left( a\right)
\right\vert ^{q\left( 1-t\right) ^{s}}dt\leq \left\vert f^{\prime }\left(
a\right) \right\vert ^{q}g_{2}\left( \theta _{1}\right) ,
\end{equation*}%
\begin{equation}
\dint\limits_{0}^{1}\left( \frac{a}{b}\right) ^{qt}\left\vert f^{\prime
}\left( a\right) \right\vert ^{qt^{s}}\left\vert f^{\prime }\left( b\right)
\right\vert ^{q\left( 1-t\right) ^{s}}dt\leq \left( \left\vert f^{\prime
}\left( b\right) \right\vert ^{s}\left\vert f^{\prime }\left( a\right)
\right\vert ^{1-s}\right) ^{q}g_{2}\left( \theta _{2}\right) .  \label{2-21}
\end{equation}%
From (\ref{2-17}) to (\ref{2-21}), (\ref{2-15}) holds.

(2) Since $\left\vert f^{\prime }\right\vert ^{q}$ is $s$-geometrically
convex on $\left[ a,b\right] $, from lemma \ref{2.1} and H\"{o}lder
inequality, we have%
\begin{eqnarray*}
&&\left\vert f\left( \sqrt{ab}\right) -\frac{1}{\ln b-\ln a}%
\dint\limits_{a}^{b}\frac{f(x)}{x}dx\right\vert \\
&\leq &\frac{\ln \frac{b}{a}}{4}\left[ a\dint\limits_{0}^{1}t\left( \frac{b}{%
a}\right) ^{\frac{t}{2}}\left\vert f^{\prime }\left( a^{1-t}\left( ab\right)
^{\frac{t}{2}}\right) \right\vert dt+b\dint\limits_{0}^{1}t\left( \frac{a}{b}%
\right) ^{\frac{t}{2}}\left\vert f^{\prime }\left( b^{1-t}\left( ab\right) ^{%
\frac{t}{2}}\right) \right\vert dt\right]
\end{eqnarray*}%
\begin{eqnarray}
&\leq &\frac{a\ln \frac{b}{a}}{4}\left( \frac{q-1}{2q-1}\right) ^{1-\frac{1}{%
q}}\left( \dint\limits_{0}^{1}\left( \frac{b}{a}\right) ^{\frac{qt}{2}%
}\left\vert f^{\prime }\left( b\right) \right\vert ^{q\left( t/2\right)
^{s}}\left\vert f^{\prime }\left( a\right) \right\vert ^{q\left( \left(
2-t\right) /2\right) ^{s}}dt\right) ^{\frac{1}{q}}  \notag \\
&&+\frac{b\ln \frac{b}{a}}{4}\left( \frac{q-1}{2q-1}\right) ^{1-\frac{1}{q}%
}\left( \dint\limits_{0}^{1}\left( \frac{a}{b}\right) ^{\frac{qt}{2}%
}\left\vert f^{\prime }\left( a\right) \right\vert ^{q\left( t/2\right)
^{s}}\left\vert f^{\prime }\left( b\right) \right\vert ^{q\left( \left(
2-t\right) /2\right) ^{s}}dt\right) ^{\frac{1}{q}}.  \label{2-22}
\end{eqnarray}%
From (\ref{2-22}) and similarly to (\ref{2-18}) - (\ref{2-21}), (\ref{2-16})
holds.
\end{proof}

If taking $s=1$ in Theorem \ref{2.2}, we can derive the following
inequalities which are the same of the inequalities (\ref{1-4}) and (\ref%
{1-5}).

\begin{corollary}
Let $f:I\subseteq \mathbb{R}_{+}\mathbb{\rightarrow R}_{+}$ be
differentiable on $I^{\circ }$, and $a,b\in I^{\circ }$ with $a<b$ and $%
f^{\prime }\in L\left[ a,b\right] .$ If $\left\vert f^{\prime }\right\vert
^{q}$ is geometrically convex on $\left[ a,b\right] $ for $q>1,$ then%
\begin{equation*}
\left\vert \frac{f(a)+f(b)}{2}-\frac{1}{\ln b-\ln a}\dint\limits_{a}^{b}%
\frac{f(x)}{x}dx\right\vert \leq \frac{1}{2}\ln \left( \frac{b}{a}\right)
\left( \frac{q-1}{2q-1}\right) ^{1-\frac{1}{q}}H_{2}\left( 1,q;g_{2}(\theta
_{1}),g_{2}(\theta _{2})\right) ,
\end{equation*}%
\begin{equation*}
\left\vert f\left( \sqrt{ab}\right) -\frac{1}{\ln b-\ln a}%
\dint\limits_{a}^{b}\frac{f(x)}{x}dx\right\vert \leq \frac{1}{4}\ln \left( 
\frac{b}{a}\right) \left( \frac{q-1}{2q-1}\right) ^{1-\frac{1}{q}%
}H_{2}\left( 1,q;g_{2}(\theta _{3}),g_{2}(\theta _{4})\right) .
\end{equation*}
\end{corollary}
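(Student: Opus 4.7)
The plan is to follow the same blueprint as the proof of Theorem \ref{2.1}, but replace the power-mean step with H\"older's inequality (so that the $(1/2)^{1-1/q}$ factor is replaced by $\bigl(\frac{q-1}{2q-1}\bigr)^{1-1/q}$ and the weight $t$ disappears from the inner integral, producing $g_2$ instead of $g_1$).

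First I would start from the second identity in Lemma \ref{1.1}, take absolute values, and move the absolute value inside each integral to get
\begin{equation*}
\left\vert \tfrac{f(a)+f(b)}{2}-\tfrac{1}{\ln b-\ln a}\int_a^b\tfrac{f(x)}{x}dx\right\vert \le \tfrac{\ln(b/a)}{2}\Bigl[a\int_0^1 t\bigl(\tfrac{b}{a}\bigr)^t|f'(a^{1-t}b^t)|\,dt + b\int_0^1 t\bigl(\tfrac{a}{b}\bigr)^t|f'(b^{1-t}a^t)|\,dt\Bigr].
\end{equation*}
Then apply H\"older's inequality in the form $\int_0^1 t\,\varphi(t)\,dt\le\bigl(\int_0^1 t^{q/(q-1)}dt\bigr)^{1-1/q}\bigl(\int_0^1\varphi(t)^q\,dt\bigr)^{1/q}$ to each integral. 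Since $\int_0^1 t^{q/(q-1)}dt=\frac{q-1}{2q-1}$, this yields the desired prefactor $\bigl(\frac{q-1}{2q-1}\bigr)^{1-1/q}$ and reduces the problem to bounding integrals of the form
\begin{equation*}
\int_0^1\bigl(\tfrac{b}{a}\bigr)^{qt}|f'(b)|^{qt^s}|f'(a)|^{q(1-t)^s}dt,\qquad \int_0^1\bigl(\tfrac{a}{b}\bigr)^{qt}|f'(a)|^{qt^s}|f'(b)|^{q(1-t)^s}dt,
\end{equation*}
where I already used $s$-geometric convexity of $|f'|^q$ to replace $|f'(a^{1-t}b^t)|^q$ and $|f'(b^{1-t}a^t)|^q$.

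The next step is where the four cases arise. Using the auxiliary inequality \eqref{2-a} I would, in each of the cases $|f'(a)|,|f'(b)|\le 1$, $|f'(a)|,|f'(b)|\ge 1$, $|f'(a)|\le 1\le|f'(b)|$, and $|f'(b)|\le 1\le|f'(a)|$, replace the exponents $t^s$ and $(1-t)^s$ by linear expressions in $t$ (either $st$ or $st+1-s$, according as the base is $\le 1$ or $\ge 1$). After this substitution the integrals become $\int_0^1 \beta^t\,dt=\frac{\beta-1}{\ln\beta}=g_2(\beta)$ evaluated at $\beta=\theta_1$ or $\beta=\theta_2$, and the constant prefactors collect into exactly the four cases defining $H_2(s,q;g_2(\theta_1),g_2(\theta_2))$. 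Combining these bounds with the inequality displayed above delivers \eqref{2-15}.

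For \eqref{2-16} I would repeat the argument starting from the first identity in Lemma \ref{1.1}, so the exponents $qt$ are replaced by $qt/2$ and the exponents $t^s,(1-t)^s$ on $|f'|$ are replaced by $(t/2)^s,((2-t)/2)^s$; the H\"older step is identical, \eqref{2-a} is applied in the same four-case fashion, and each resulting integral $\int_0^1\beta^{t/2}dt$ again evaluates (after an obvious substitution) to $g_2(\sqrt{\theta_i})=g_2(\theta_3)$ or $g_2(\theta_4)$. The main obstacle is purely bookkeeping: tracking the four cases and making sure, in the mixed cases, that the right combination of $|f'(a)|^s,|f'(b)|^{1-s}$ etc.\ is assigned to the correct integral so that the resulting expression matches the piecewise definition of $H_2$; the author's abbreviation ``similarly to \eqref{2-18}--\eqref{2-21}'' is precisely a signal that this bookkeeping is mechanical once the H\"older estimate \eqref{2-22} is in place.
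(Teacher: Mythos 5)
Your argument is correct: Lemma \ref{1.1}, the triangle inequality, H\"{o}lder with the split $t\cdot\varphi(t)$ and $\int_{0}^{1}t^{q/(q-1)}dt=\tfrac{q-1}{2q-1}$, then geometric convexity of $\left\vert f^{\prime }\right\vert ^{q}$, reducing everything to $\int_{0}^{1}\beta ^{t}dt=g_{2}(\beta )$ at $\beta =\theta _{1},\theta _{2}$ (and $\int_{0}^{1}\beta ^{t/2}dt=g_{2}(\sqrt{\beta })$ giving $\theta _{3},\theta _{4}$) -- this is exactly the computation the paper performs in (\ref{2-17})--(\ref{2-22}). The difference is one of economy: the paper does that work once, in the proof of Theorem \ref{2.2} for general $s\in (0,1]$, and obtains the corollary by simply setting $s=1$ there; you instead replay the whole derivation for the corollary itself, carrying along the auxiliary inequality (\ref{2-a}) and the four-case analysis. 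Note that under the corollary's hypothesis ($s=1$, i.e.\ plain geometric convexity) that case analysis is vacuous: (\ref{2-a}) reduces to equalities, the exponents $t^{s}$, $(1-t)^{s}$ are already linear, and all four branches of $H_{2}\left( 1,q;\cdot ,\cdot \right) $ coincide, so your ``bookkeeping obstacle'' disappears and the bound is recognized as identical to (\ref{1-4}) and (\ref{1-5}). Your route buys a self-contained proof of the corollary; the paper's route is a one-line specialization of the already-proved Theorem \ref{2.2}.
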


\section{Application to Special Means}

Let us recall the following special means of two nonnegative number $a,b$
with $b>a:$

\begin{enumerate}
\item The arithmetic mean%
\begin{equation*}
A=A\left( a,b\right) :=\frac{a+b}{2}.
\end{equation*}

\item The geometric mean%
\begin{equation*}
G=G\left( a,b\right) :=\sqrt{ab}.
\end{equation*}

\item The logarithmic mean%
\begin{equation*}
L=L\left( a,b\right) :=\frac{b-a}{\ln b-\ln a}.
\end{equation*}

\item The p-logarithmic mean%
\begin{equation*}
L_{p}=L_{p}\left( a,b\right) :=\left( \frac{b^{p+1}-a^{p+1}}{(p+1)(b-a)}%
\right) ^{\frac{1}{p}},\ \ p\in 
\mathbb{R}
\backslash \left\{ -1,0\right\} .
\end{equation*}
\end{enumerate}

Let $f(x)=\left( x^{s}/s\right) ,\ x\in \left( 0,1\right] ,\ 0<s<1,\ q\geq 1$%
\ then the function $\left\vert f^{\prime }(x)\right\vert ^{q}=x^{(s-1)q}$
is $s$-geometrically convex on $\left( 0,1\right] $ for $0<s<1$ and $%
\left\vert f^{\prime }(a)\right\vert =a^{s-1}>b^{s-1}=\left\vert f^{\prime
}(b)\right\vert \geq 1$ (see \cite{ZJQ12}).

\begin{proposition}
\label{3.1}Let $0<a<b\leq 1,\ 0<s<1,$and $q\geq 1.$ Then 
\begin{eqnarray*}
&&\left\vert A\left( a^{s},b^{s}\right) -L\left( a^{s},b^{s}\right)
\right\vert \leq \frac{s}{2G^{2(s-1)^{2}}(a,b)}\left( \frac{b-a}{2L\left(
a,b\right) }\right) ^{1-\frac{1}{q}}\left( \frac{1}{\left( s^{2}-s+1\right) q%
}\right) ^{\frac{1}{q}} \\
&&\times \left[ \left\{ b^{\left( s^{2}-s+1\right) q}-L(a^{\left(
s^{2}-s+1\right) q},b^{\left( s^{2}-s+1\right) q})\right\} ^{\frac{1}{q}%
}\right. \\
&&\left. +\left\{ L(a^{\left( s^{2}-s+1\right) q},b^{\left( s^{2}-s+1\right)
q})-a^{\left( s^{2}-s+1\right) q}\right\} ^{\frac{1}{q}}\right] ,
\end{eqnarray*}%
\begin{eqnarray*}
&&\left\vert G^{s}\left( a,b\right) -L\left( a^{s},b^{s}\right) \right\vert
\leq \frac{s}{2G^{(s-1)^{2}}(a,b)}\left( \frac{b-a}{4L\left( a,b\right) }%
\right) ^{1-\frac{1}{q}}\left( \frac{1}{\left( s^{2}-s+1\right) q}\right) ^{%
\frac{1}{q}} \\
&&\times \left[ G(a^{s},b^{-(s-1)^{2}})\left\{ b^{\left( s^{2}-s+1\right)
q/2}-L(a^{\left( s^{2}-s+1\right) q/2},b^{\left( s^{2}-s+1\right)
q/2})\right\} ^{\frac{1}{q}}\right. \\
&&\left. +G(b^{s},a^{-(s-1)^{2}})\left\{ L(a^{\left( s^{2}-s+1\right)
q/2},b^{\left( s^{2}-s+1\right) q/2})-a^{\left( s^{2}-s+1\right)
q/2}\right\} ^{\frac{1}{q}}\right] ,
\end{eqnarray*}
\end{proposition}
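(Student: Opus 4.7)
The plan is to apply Theorem \ref{2.1} to $f(x) = x^s/s$ on $(0,1]$. As noted just before the proposition, $|f'(x)|^q = x^{(s-1)q}$ is $s$-geometrically convex there, and since $0<a<b\leq 1$ with $0<s<1$, both $|f'(a)|=a^{s-1}$ and $|f'(b)|=b^{s-1}$ are at least $1$. Hence we fall into the second branch of the piecewise definition of $H_1$, so only the prefactors $a|f'(a)||f'(b)|^{1-s}$ and $b|f'(b)||f'(a)|^{1-s}$ multiplying $g_1^{1/q}(\theta_i),g_1^{1/q}(\theta_j)$ are active.

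First I would rewrite the quantities on the left side of the theorem in terms of the special means. Direct substitution gives $(f(a)+f(b))/2 = A(a^s,b^s)/s$, $f(\sqrt{ab}) = G^s(a,b)/s$, and $\frac{1}{\ln b - \ln a}\int_a^b f(x)/x\,dx = \frac{b^s - a^s}{s^2\ln(b/a)} = L(a^s,b^s)/s$. Multiplying both sides of (\ref{2-1}) and (\ref{2-2}) by $s$ therefore yields $|A(a^s,b^s) - L(a^s,b^s)|$ and $|G^s(a,b) - L(a^s,b^s)|$ on the left, which explains the leading $s$ in the statement.

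Next, I would compute the auxiliary quantities. Writing $\alpha := s^2 - s + 1$, direct substitution gives $\theta_1 = (b/a)^{\alpha q}$, $\theta_2 = \theta_1^{-1}$, $\theta_3 = (b/a)^{\alpha q/2}$, $\theta_4 = \theta_3^{-1}$. Evaluating the nondegenerate branch of $g_1$ and clearing $a^{\alpha q}$ from the numerator yields $g_1(\theta_1) = [b^{\alpha q} - L(a^{\alpha q},b^{\alpha q})]/[a^{\alpha q}\alpha q\ln(b/a)]$ and symmetrically $g_1(\theta_2) = [L(a^{\alpha q},b^{\alpha q}) - a^{\alpha q}]/[b^{\alpha q}\alpha q\ln(b/a)]$; the analogous formulas with $\alpha q$ replaced by $\alpha q/2$ cover $\theta_3,\theta_4$. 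Taking $q$-th roots pulls the factors $a^{-\alpha}$ and $b^{-\alpha}$ out of $g_1^{1/q}(\theta_1)$ and $g_1^{1/q}(\theta_2)$; these combine with the prefactors $a^s b^{-(s-1)^2}$ and $b^s a^{-(s-1)^2}$, and the identity $s - \alpha = -(s-1)^2$ collapses the result to $(ab)^{-(s-1)^2} = G^{-2(s-1)^2}(a,b)$, which is exactly the coefficient appearing in the first bound.

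Finally, the scalar factor $\ln(b/a)(1/2)^{2-1/q}$ rewrites as $\tfrac{1}{2}(\ln(b/a)/2)^{1-1/q}$, and $\ln(b/a) = (b-a)/L(a,b)$ converts this into the stated $((b-a)/(2L(a,b)))^{1-1/q}$; the $(\alpha q\ln(b/a))^{-1/q}$ from the $g_1$ denominators combines with the leftover $(\ln(b/a))^{-1/q}$ to produce $((s^2-s+1)q)^{-1/q}$, yielding the first inequality. For the $G^s$ inequality the same argument applies with $\alpha q$ halved; using $s - \alpha/2 = (s-(s-1)^2)/2$, the $a,b$ powers assemble into $G(a^s,b^{-(s-1)^2})$ and $G(b^s,a^{-(s-1)^2})$ divided by $G^{(s-1)^2}(a,b)$, while the scalar $(1/2)^{3-1/q}$ produces $((b-a)/(4L(a,b)))^{1-1/q}$. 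The main obstacle will be careful exponent bookkeeping across the prefactors, the $g_1^{1/q}$ denominators, and the halved index $\alpha q/2$; however, all the essential algebra reduces to the single identity $s - \alpha = -(s-1)^2$.
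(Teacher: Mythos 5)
Your proposal is correct and follows exactly the paper's route: the paper's proof is the one-line statement that the result follows from inequalities (\ref{2-1}) and (\ref{2-2}) of Theorem \ref{2.1} applied to $f(x)=x^{s}/s$, and you carry out precisely that substitution (correctly identifying the branch $\left\vert f^{\prime }(a)\right\vert ,\left\vert f^{\prime }(b)\right\vert \geq 1$ of $H_{1}$, the values of $\theta _{1},\dots ,\theta _{4}$, and the reduction via $s-(s^{2}-s+1)=-(s-1)^{2}$). In effect you supply the exponent bookkeeping that the paper leaves implicit; the checks all come out right.
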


\begin{proof}
The assertion follows from the inequalities (\ref{2-1}) and (\ref{2-2}) in
Theorem \ref{2.1} for $f(x)=\left( x^{s}/s\right) ,\ x\in \left( 0,1\right]
,\ 0<s<1,$.
\end{proof}

\begin{proposition}
\label{3.2}Let $0<a<b\leq 1,\ 0<s<1,$and $q>1.$ Then 
\begin{eqnarray*}
&&\left\vert A\left( a^{s},b^{s}\right) -L\left( a^{s},b^{s}\right)
\right\vert \\
&\leq &\frac{s\left( b-a\right) }{L(a,b)G^{2(s-1)^{2}}(a,b)}\left( \frac{q-1%
}{2q-1}\right) ^{1-\frac{1}{q}}L^{\frac{1}{q}}(a^{\left( s^{2}-s+1\right)
q},b^{\left( s^{2}-s+1\right) q}),
\end{eqnarray*}%
\begin{eqnarray*}
&&\left\vert G^{s}\left( a,b\right) -L\left( a^{s},b^{s}\right) \right\vert
\\
&\leq &\frac{s\left( b-a\right) }{2L(a,b)G^{(s-1)^{2}}(a,b)}\left( \frac{q-1%
}{2q-1}\right) ^{1-\frac{1}{q}}L^{\frac{1}{q}}(a^{\left( s^{2}-s+1\right)
q/2},b^{\left( s^{2}-s+1\right) q/2}) \\
&&\times A\left( G\left( a^{-(s-1)^{2}},b^{s}\right) ,G\left(
a^{s},b^{-(s-1)^{2}}\right) \right) ,
\end{eqnarray*}%
\bigskip
\end{proposition}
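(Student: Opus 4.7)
The plan is to specialize Theorem~\ref{2.2} to the function $f(x)=x^{s}/s$ on $(0,1]$, paralleling the derivation of Proposition~\ref{3.1} from Theorem~\ref{2.1}. First I verify the hypotheses: since $|f'(x)|^{q}=x^{(s-1)q}$ is $s$-geometrically convex on $(0,1]$, and since $0<a<b\le1$ with $0<s<1$ forces $|f'(a)|=a^{s-1}\geq b^{s-1}=|f'(b)|\geq1$, the second branch of $H_{2}$ (the case $|f'(a)|,|f'(b)|\geq1$) is the one I will invoke.

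I then identify the three quantities appearing on the left-hand sides of Theorem~\ref{2.2}: $(f(a)+f(b))/2=A(a^{s},b^{s})/s$, $f(\sqrt{ab})=G^{s}(a,b)/s$, and $\frac{1}{\ln b-\ln a}\int_{a}^{b}f(x)/x\,dx=L(a^{s},b^{s})/s$. The common factor $1/s$ on the left explains the $s$ appearing as a prefactor in the claimed inequalities, and the prefactor $\ln(b/a)$ on the right is rewritten as $(b-a)/L(a,b)$.

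For the right-hand side, a direct computation from (\ref{2-3}) yields $\theta_{1}=(b/a)^{q(s^{2}-s+1)}$ and $\theta_{2}=(a/b)^{q(s^{2}-s+1)}$; from the definition of $g_{2}$ one then obtains $a^{q(s^{2}-s+1)}g_{2}(\theta_{1})=b^{q(s^{2}-s+1)}g_{2}(\theta_{2})=L(a^{q(s^{2}-s+1)},b^{q(s^{2}-s+1)})$. Combined with the coefficients $a|f'(a)||f'(b)|^{1-s}=a^{s}b^{-(s-1)^{2}}$ and $b|f'(b)||f'(a)|^{1-s}=b^{s}a^{-(s-1)^{2}}$ from case (ii) of $H_{2}$, both summands reduce to $L^{1/q}(a^{q(s^{2}-s+1)},b^{q(s^{2}-s+1)})/G^{2(s-1)^{2}}(a,b)$, and collecting the factor $2$ against the $1/2$ in front yields the first inequality.

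For the second inequality I repeat the argument with $\theta_{3}=\sqrt{\theta_{1}}$ and $\theta_{4}=\sqrt{\theta_{2}}$, which give $g_{2}^{1/q}(\theta_{3})=L^{1/q}(a^{q(s^{2}-s+1)/2},b^{q(s^{2}-s+1)/2})/a^{(s^{2}-s+1)/2}$ and symmetrically for $\theta_{4}$. The principal technical step is the exponent identity $s-(s^{2}-s+1)/2+(s-1)^{2}/2=s/2$, which lets me rewrite $a^{s}b^{-(s-1)^{2}}\cdot a^{-(s^{2}-s+1)/2}=G(a^{s},b^{-(s-1)^{2}})\cdot G^{-(s-1)^{2}}(a,b)$ and symmetrically $b^{s}a^{-(s-1)^{2}}\cdot b^{-(s^{2}-s+1)/2}=G(a^{-(s-1)^{2}},b^{s})\cdot G^{-(s-1)^{2}}(a,b)$. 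Adding these produces the factor $2\,A(G(a^{-(s-1)^{2}},b^{s}),G(a^{s},b^{-(s-1)^{2}}))$, and the overall $2$ absorbs into the $1/4$ coefficient to give the claimed $1/2$. This exponent bookkeeping is the only delicate part of the proof; everything else is direct substitution into Theorem~\ref{2.2}.
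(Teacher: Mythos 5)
Your proposal is correct and follows exactly the route the paper intends: apply Theorem \ref{2.2} to $f(x)=x^{s}/s$, note $|f'(a)|,|f'(b)|\ge 1$ so the second branch of $H_{2}$ applies, and simplify $\theta_{1},\theta_{2},\theta_{3},\theta_{4}$ and $g_{2}$ into logarithmic and geometric means. The paper's own proof is just a one-line citation of (\ref{2-15}) and (\ref{2-16}); your exponent computations (in particular $a^{q(s^{2}-s+1)}g_{2}(\theta_{1})=L(a^{q(s^{2}-s+1)},b^{q(s^{2}-s+1)})$ and the identity $s-(s^{2}-s+1)/2+(s-1)^{2}/2=s/2$) correctly supply the details it omits.
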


\begin{proof}
The assertion follows from the inequalities (\ref{2-15}) and (\ref{2-16}) in
Theorem \ref{2.2} for $f(x)=\left( x^{s}/s\right) ,\ x\in \left( 0,1\right]
,\ 0<s<1,$.
\end{proof}

\end{document}